\newtheorem{thm}{Theorem}[section] 
\newtheorem{lem}[thm]{Lemma} 
\newtheorem{prop}[thm]{Proposition} 
\newtheorem{conj}[thm]{Conjecture}
\newtheorem{definition}[thm]{Definition}
\newtheorem{remark}{Remark}
\newcommand{\cov}{\mathtt{Cov}}
\newcommand{\pat}{\mathcal{Q}}
\newcommand{\cp}{\mathcal{P}}
\newcommand{\dom}{\mathcal{C}}
\title{\LARGE  \bf Covering with Excess One: Seeing the Topology}
\author{Han Wang}
\date{}
\begin{document}
\maketitle
\begin{abstract}
	We have initiated the study of topology of the space of coverings on grid domains. The space has the following constraint: while all the covering agents can move freely (we allow overlapping) on the domain, their union must cover the whole domain. A minimal number $N$ of the covering agents is required for a successful covering of the domain. In this paper, we demonstrate beautiful topological structures of this space on grid domains in 2D with $N+1$ coverings, the topology of the space has the homotopy type of $1$ dimensional complex, regardless of the domain shape. We also present the Euler characteristic formula which connects the topology of the space with that of the domain itself.
\end{abstract}

\section{INTRODUCTION} % (fold)
\label{sec:introduction}
\subsection{BACKGROUND} % (fold)
\label{sub:background}
The spaces of coverings appear in many applications. Usually a (finite) family of balls are placed randomly to cover a metric space; such stochastic covering known as a \emph{coverage process} has been studied extensively (see e.g., ~\cite{MR973404}). In recent years, coverage problem in the context of sensor networks and robot motion planning has become an active research area; robot swarms (sensors) are deployed to cover a given domain (see~\cite{ExploreCoverage:DARS:10},~\cite{4739194}, etc.). If we consider each moving robot to be a labelled covering agent, we can formulate the space of coverings as follows: 

\begin{definition}
	The \textbf{covering configuration space} of $n$ labelled points on a topological space $X$ is defined as the space 
	\[
	\cov_{X}(n,\mathscr{F}):=\{(f_{1},f_{2},\dots,f_{n}), f_{i}\in \mathscr{F}:\bigcup_{i=1}^{n}\mathscr{M}_{f_{i}} = X \},
	\]
	where $\mathscr{F}$ is a topological space parametrizing subsets of $X$ as $\{\mathscr{M}_{f}\}_{f\in \mathscr{F}}$. 
\end{definition}

Recently using topological approach to study the covering configuration space, among other \emph{applied configuration spaces} (e.g., configuration space of hard spheres~\cite{Baryshnikov08022013}) starts to bring new insights into the subject. Understanding its topological features can be useful in many applications. The total Betti number of this space, for example, would provide lower bounds for the depth of the algebraic decision trees (which are used to decide membership in a semialgebraic set in theoretical computer science), according to Yao-Lov\'asz-Bj\"orner~\cite{MR1473048}. 

As our first attempt to understanding the covering configuration space for planar and higher dimensional domains, in the sequel we shall focus on grid domains which are formed by square plaques in 2D, all the coverings are of the same shape as cubical, in order to cope with the domain geometry in a nice fashion. By our assumption, there are $N+1$ such covering agents, and the domain can be covered by exactly $N$ of them. For simplicity whenever possible, we shall denote this covering configuration space as $\cov_{G_{N}}(N+1)$, where $G_{N}$ denotes the grid domain that can be covered precisely by $N$ unit square plaques.
% subsection background (end)
\subsection{SIMPLE EXAMPLES} % (fold)
\label{sub:simple_examples}
The 1D scenario is simple: for a line segment (say, unit closed interval), consider covering it with $n$ balls of the same length. Y. Baryshnikov first studied this case, it turns out that the covering configuration
space is homotopy equivalent to
the skeleton of the \emph{permutahedron}~\cite{MR1311028} with $n$ vertices. More precisely, denote $\cov_{I}(n,r)$ as the covering configuration space for unit interval $I$ with $n$ balls of radii $r$, we have 

\begin{thm}[Y. Baryshnikov]
	\label{t:1D}
  \[ 
\cov_{I}(n,r)\simeq_h \mathtt{Skel}_{k} (\Pi_{n-1}),
\]
where $\Pi_{n-1}$ is the permutahedron with each vertice coresponding to a
permutation of $(1,\dots,n)$, $k=n-\lceil\frac{1}{2r}\rceil$ and $\lceil\frac{1}{2r}\rceil$ is the minimal number of $r$-balls to cover $[0,1]$. $k$ is the \emph{excess number}.
\end{thm}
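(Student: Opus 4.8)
The plan is to realize $\cov_{I}(n,r)$ as a finite union of convex polytopes indexed by the chambers of the braid arrangement, and then to read off its homotopy type via the nerve lemma. First I would parametrize a configuration by the centers $x_{1},\dots,x_{n}$ of the $n$ balls, so that $\mathscr{M}_{f_{i}}=[x_{i}-r,x_{i}+r]\cap[0,1]$ and $\cov_{I}(n,r)$ becomes the subset of center-tuples for which the covering constraint holds. Fixing a weak order of the centers compatible with a permutation $\sigma\in S_{n}$, the covering constraint becomes a system of linear inequalities: the leftmost center lies within $r$ of $0$, the rightmost within $r$ of $1$, and every pair of order-consecutive centers differs by at most $2r$ (no gap). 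Hence, writing $P_{\sigma}$ for the closed set of coverings whose centers are weakly ordered by $\sigma$, each $P_{\sigma}$ is a convex polytope, the $P_{\sigma}$ cover $\cov_{I}(n,r)$, and any intersection $\bigcap_{\sigma\in T}P_{\sigma}$ is again convex, being cut out by the same inequalities together with the ties shared by the orders in $T$.

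Because a finite closed cover by convex polytopes whose nonempty intersections are contractible is a good cover, the nerve lemma gives $\cov_{I}(n,r)\simeq_{h}\mathcal{N}$, where $\mathcal{N}$ is the nerve of $\{P_{\sigma}\}$. A set $T$ of chambers spans a simplex of $\mathcal{N}$ exactly when $\bigcap_{\sigma\in T}P_{\sigma}\neq\varnothing$, which happens precisely when the orders in $T$ are the linear extensions of a common ordered set partition $\pi$ of $\{1,\dots,n\}$ and that partition admits a covering. Thus $\mathcal{N}$ is the order complex of the poset of covering-admissible faces of the braid fan, i.e. of those ordered set partitions that can be realized by a covering configuration.

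The decisive step is the feasibility criterion. On the face indexed by an ordered partition $\pi=(B_{1},\dots,B_{j})$ the centers take at most $j$ distinct values, so the union of the corresponding balls is a union of at most $j$ intervals of length $2r$; this covers $[0,1]$ iff $j\ge\lceil 1/(2r)\rceil=m$, and conversely $j$ well-placed clusters always cover. Hence the admissible faces are exactly the ordered set partitions into at least $m$ blocks. Under the standard correspondence between ordered set partitions of $\{1,\dots,n\}$ into $j$ blocks and the $(n-j)$-dimensional faces of $\Pi_{n-1}$, the admissible faces are precisely the faces of dimension at most $n-m$, and their order complex is the barycentric subdivision of $\mathtt{Skel}_{n-m}(\Pi_{n-1})$. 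Since $k=n-m=n-\lceil 1/(2r)\rceil$, this yields $\cov_{I}(n,r)\simeq_{h}\mathtt{Skel}_{k}(\Pi_{n-1})$, as claimed. The extreme cases $n=2$ with $r\nearrow 1/2$ (two contractible components) and $r\ge 1/2$ (the full, contractible permutahedron) provide useful consistency checks.

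The main obstacle is the combinatorial bookkeeping that turns the nerve into the barycentric subdivision of the permutahedral skeleton: one must verify that nonempty intersections of closed chambers correspond bijectively to the stars of braid-fan faces (so that $\mathcal{N}$ genuinely is an order complex), and that the feasibility criterion cuts this poset down to exactly the faces of dimension $\le k$ with the correct incidence relations. A secondary point requiring care is the goodness of the cover at the boundary of $\cov_{I}(n,r)$ — where a ball exposes an endpoint or a gap is about to open — so that every nonempty finite intersection is genuinely contractible and the nerve lemma applies without modification.
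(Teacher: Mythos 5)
First, a point of comparison: the paper does not actually prove Theorem~\ref{t:1D} --- it is quoted as Baryshnikov's result and only illustrated by examples (the hexagon for $\cov_I(3,\frac14)$, figure~\ref{fig:3pts}), so your proposal cannot be matched against an in-paper argument and must be judged on its own. Most of it is sound. One small caveat in the setup: your chamber-wise linear description (leftmost center within $r$ of $0$, consecutive centers within $2r$, rightmost within $r$ of $1$) is equivalent to the covering condition only if the centers are constrained to $[0,1]$ (or pushed there by an obvious deformation retraction); with unconstrained centers a ball can exit the interval and the no-gap inequalities are no longer necessary. Granting that, each $P_\sigma$ and every intersection $\bigcap_{\sigma\in T}P_\sigma$ is a compact convex polytope, so Borsuk's nerve theorem for finite closed convex covers applies directly --- your ``secondary point'' about goodness at the boundary is moot, since nonempty convex sets are contractible no matter how degenerate. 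Your feasibility criterion is also correct: a point of $\bigcap_{\sigma\in T}P_\sigma$ has at most $j$ distinct coordinates, where $j$ is the number of blocks of the common coarsening $\pi_T$, and $j$ intervals of length $2r$ cover $[0,1]$ iff $j\geq m=\lceil\frac{1}{2r}\rceil$.

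The genuine gap is the sentence ``Thus $\mathcal{N}$ is the order complex of the poset of covering-admissible faces.'' It is not, and you half-suspected this in your final paragraph. The nerve of your cover has one vertex per \emph{chamber} (permutation), whereas the order complex of the admissible-face poset --- i.e.\ the barycentric subdivision of $\mathtt{Skel}_k(\Pi_{n-1})$ --- has one vertex per admissible \emph{face}. Already for $n=3$, $r=\frac14$ the nerve is the $6$-cycle on the six permutations (two chambers span an edge iff they differ by an adjacent transposition; triples force the full tie, which is inadmissible), i.e.\ the hexagon itself, not its $12$-vertex subdivision. What $\mathcal{N}$ actually is, under the dictionary partitions-into-$j$-blocks $\leftrightarrow$ faces of dimension $n-j$, is the complex whose simplices are the subsets of vertices of $\Pi_{n-1}$ contained in a common face of dimension at most $k$. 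This \emph{is} homotopy equivalent to $\mathtt{Skel}_k(\Pi_{n-1})$, but that requires an extra lemma you never state: for instance, cover $\mathtt{Skel}_k(\Pi_{n-1})$ by its closed $k$-faces and cover $\mathcal{N}$ by the full simplices on the vertex sets of $k$-faces; since for faces of a polytope $V(F\cap F')=V(F)\cap V(F')$ and every face of dimension less than $k$ lies in some $k$-face, both covers are good and have the same nerve, whence $\mathcal{N}\simeq_h\mathtt{Skel}_k(\Pi_{n-1})$. With that substitution for the false order-complex identification, your strategy does close the proof; as written, the decisive combinatorial step is wrong.
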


Based on the theorem, we have the conclusion that for covering $I$ with excess one, we have 
\[
\cov_{I_{N}}(N+1)\simeq_h\mathtt{Skel}_{1} (\Pi_{N}).	
\]

For illustration purpose of Theorem~\ref{t:1D}, consider the following simplest nontrivial example: covering $I$ with three points. The reader may find this helpful for intuition. 

As $r$ varies, $\cov_I(3,r)$ changes. The trivial case happens when $r$ is sufficently large, say, $r=\frac12$, then $\cov_I(3,r)$ is contractible to a point. On the other extreme case, when $r$ is too small, $\cov_I(3,r)$ would be $\emptyset$, which means that there is no placement available to cover $I$. Suppose $r=\frac 16$, then we have to place the three balls side by side without any overlap. If unlabeled, this corresponds to one point in the covering configuration space. Therefore, $\cov_I(3,\frac 16)$ consists of $6$ isolated points. Suppose $r=\frac 14$, we are able to cover $I$ with exactly $2$ points, the third one can move freely on $I$, acting as the excess covering agent. When it overlaps with one or the other point (we shall call such placement as a critical configuration), the before-fixed one is free of being restricted in moving, and can move anywhere on $I$, as now it acts as the excess covering. Performing such permutation successively will lead to a rotation, hence $\cov_I(3,\frac 14)$ is homotopy equivalent to a hexagon, or simply a circle. This is exactly what Theorem~\ref{t:1D} tells us. On the other hand once isolated points become connected in the configuration space, as the number of excess covering agents increases from $0$ to $1$. 

\begin{figure}[h!]
\centering
\includegraphics[width=5cm]{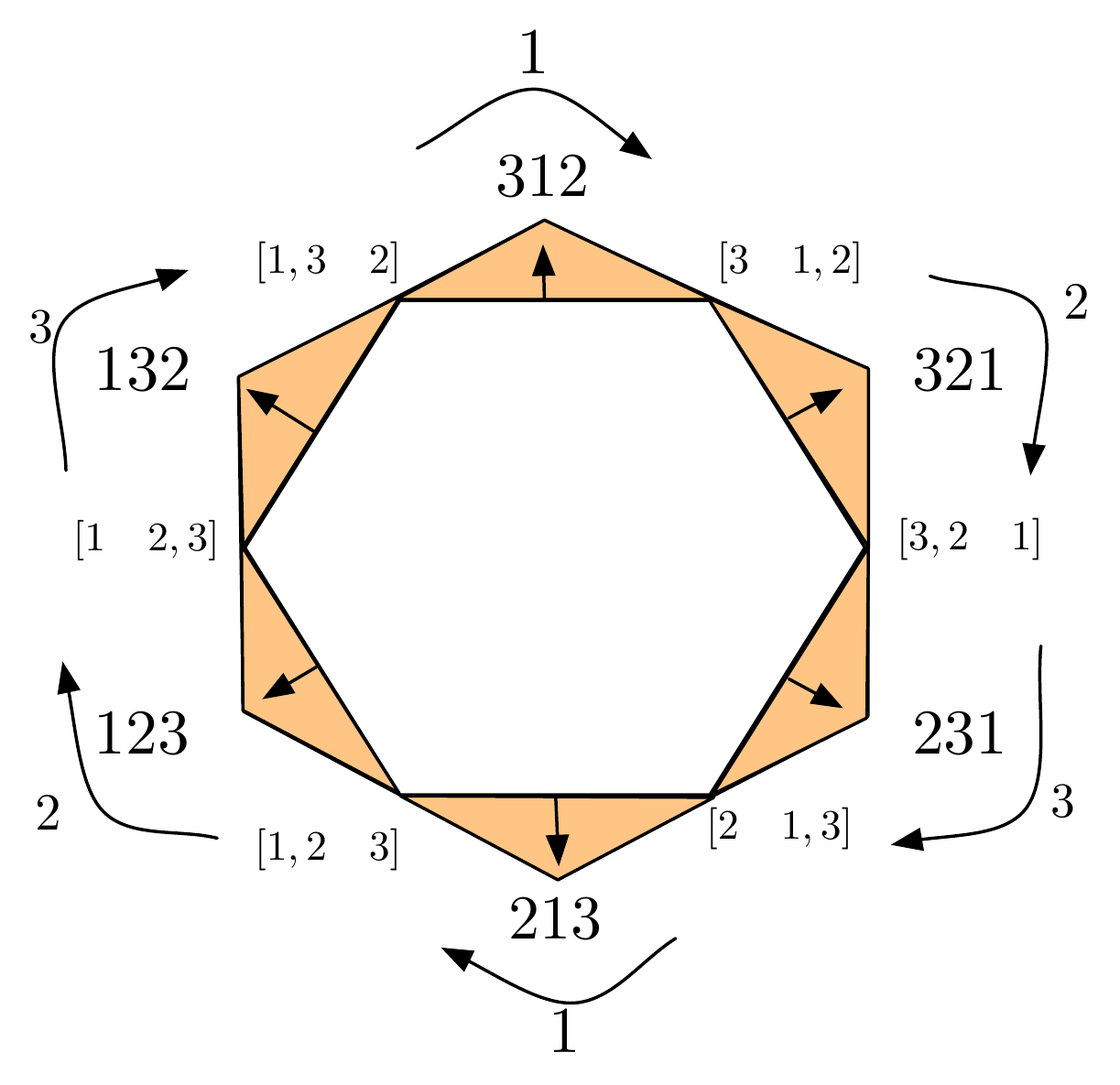}
\caption{Visualization of the covering configuration space for 3 points to cover $I$ while 2 is enough. The figure shows how the 1 skeleton of permutahedron $\Pi_{2}$ can be thought of as a homotopy equivalence of a hexagon with vertices labelled by critical configurations, plus 6 triangular cells glued on its edges.}
\label{fig:3pts}
\end{figure}

Here we have another point to make, which is a little technical but turns out to be very helpful in the proof we will present later: there is a one-to-one correspondence between an edge in $\mathtt{Skel}_{1}(\Pi_{2})$ and a critical configuration point. The visualization is given in figure~\ref{fig:3pts}. The inner hexagon has its vertices labelled by those critical configurations. Edge is formed when we move the excess agent to form another critical configuration. $\mathtt{Skel}_{1}(\Pi_{2})$ (the bigger hexagon) can be recovered first by the expansion of the inner hexagon and then by deformation retracting to it. We shall make use of this `dual' viewpoint as it helps us identify the critical configurations while the skeleton of a permutahedron would not naturally do so.   

As a more interesting example, the visualization of $\cov_{I_{3}}(4)$ is given in figure~\ref{fig:4pts}.

\begin{figure}[h!]
\centering
\includegraphics[width=8cm]{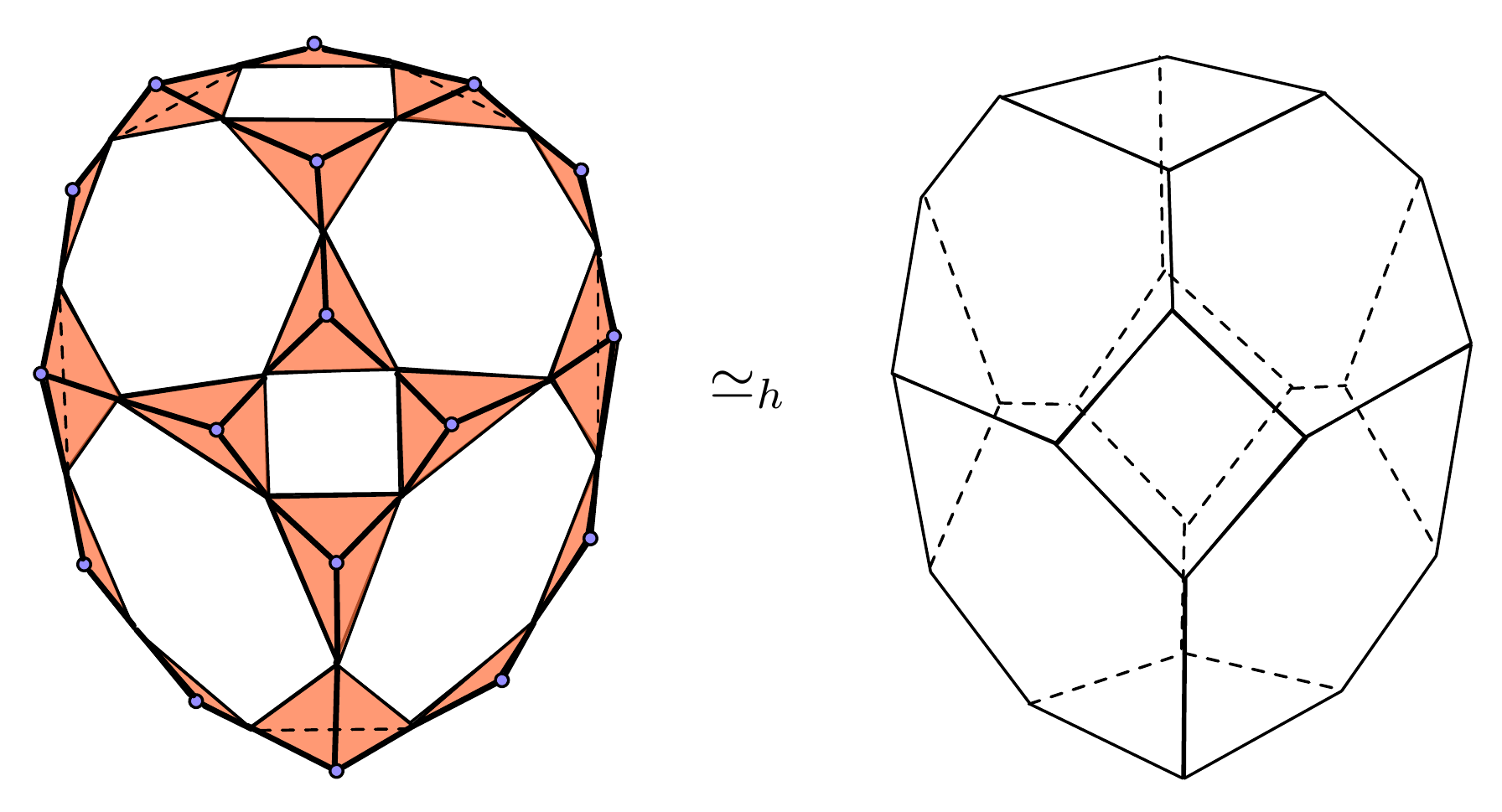}
\caption{Visualization of the covering configuration space for 4 points with radius $\frac 16$ to cover $I$ (in this case exactly three points cover $I$ just right), on the right is the 1 skeleton of the permutahedron $\Pi_{3}$, on the left is the `dual' view.}
\label{fig:4pts}
\end{figure}

Now consider a 2D grid domain $G_{2\times 2}$ formed by 2 by 2 square plaques. The excess agent have the freedom of moving either vertically or horizontally, with other four covering agents fixed. Its free patrolling region $\pat$ is shown in figure~\ref{fig:2by2}.

\begin{figure}[h!]
\centering
\includegraphics[width=4cm]{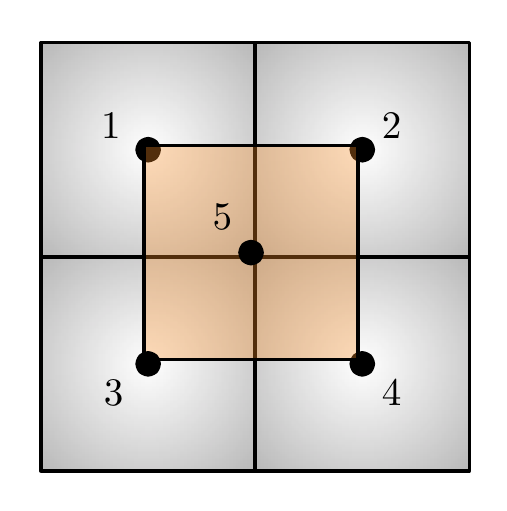}
\caption{Covering a 2 by 2 grid domain with 5 balls $B_{\infty}(x_{i},\frac12)$, the central square is the region $\pat$ where number 5 can move freely.}
\label{fig:2by2}
\end{figure}

Note that restricting the excess's move on one edge of $\partial\pat$ becomes the problem of covering 1D line segments, we know $\cov_{I}(3,1/2)$ has the homotopy type of a hexagon, moving on one edge of $\partial \pat$ traverses one edge of the hexagon, so $\partial \pat$ can be glued with $\mathtt{Skel}_{1}(\Pi_{2})$ along that edge. Let's identify $\pat$ with a $2$ by $2$ labeling matrix $C$, namely, if $c_{ij}\in\{1,2,3,4,5\}$ for all $i,j=1,2$, then $C$ is identified with $\pat$; if $C$ has only three elements from the set $\{1,2,3,4,5\}$ specified, then $C$ is identified with a vertex of $\pat$; if $C$ has only two elements from the set $\{1,2,3,4,5\}$ specified in a row or in a column, then $C$ is identified with a hexagon with one edge on $\pat$. Tracking the labeling $C$ for cells of different dimensions in $\pat$ tells us how $\pat$ is glued in with other parts locally, see figure~\ref{fig:2by2glue}, which forms the subcomplex of $\cov_{G_{2\times 2}}(5)$.

\begin{figure}[h!]
\centering
\includegraphics[width=8.7cm]{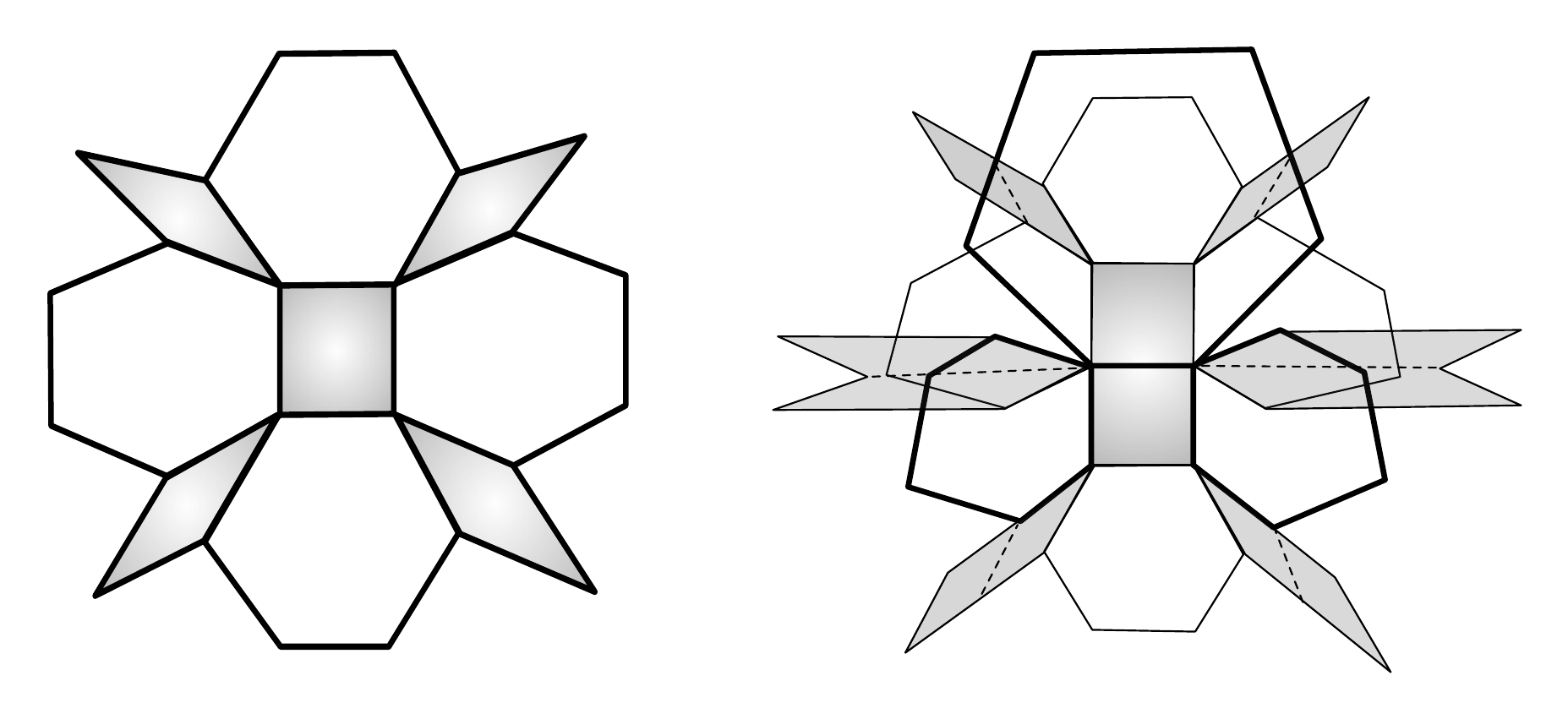}
\caption{The figure on the left illustrates how $\pat$ for $G_{2\times 2}$ is glued with $\mathtt{Skel}_{1}(\Pi_{2})$ and free patrolling regions of different excess covering by choice. Similar visualization can also be done for 3D grid domain $G_{2\times 2\times 2}$. On the right is the gluing for $G_{2\times 3}$.}
\label{fig:2by2glue}
\end{figure}

Since $\pat$ has free face in $\cov_{G_{2\times 2}}(5)$, therefore, with the removal of 2-cells, the complex is collapsible to a 1 dimensional complex. We only need to enumerate $\pat$ and hexagons. Therefore, we have $5!$ $\pat$ and $4\times P(5,2)$ hexagons. Note that each edge is shared by one $\pat$ and one hexagon, the total number of edge should be $(4\times 5!+6(4\times P(5,2)))/2$; each vertex is shared by two hexagons (or alternatively, two $\pat$'s), the total number of vertices should be $4\times5!/2$. Thus, the Euler characteristic should be 
\[
	\chi(\cov_{X_{4}}(5))=\#(\pat)-\#(\text{edges})+\#(\text{vertices})=-5!\;.
\]
% subsection simple_examples (end)

\subsection{MAIN RESULTS} % (fold)
\label{sub:main_results}
Covering with excess one becomes more complicated in higher dimensions, as the excess covering can accordingly move freely in higher dimensional regions. In the sequel we assume the grid domain $G_{A}$ is connected in 2D and is formed by unit square plaques. The area of $R_{A}$ is $A$; this implies we can cover $G_{A}$ with exactly $A$ unit square plaques. 

We have proved the following: 

\begin{thm}\label{t:collapsible}
	Suppose the grid domain $G_{A}$ can be covered precisely by $A$ unit square plaques, then $\cov_{G_{A}}(A+1)$ is homotopy equivalent to $1$ dimensional complex.
\end{thm}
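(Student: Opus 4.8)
The plan is to exhibit $\cov_{G_{A}}(A+1)$ as homotopy equivalent to a finite $2$-dimensional cell complex $K$ built from combinatorial data, and then to collapse away all of its $2$-cells, exactly as in the worked example of $\cov_{G_{2\times 2}}(5)$. First I would construct $K$. Since $G_{A}$ is tiled by $A$ unit plaques and has area exactly $A$, any configuration in which all but one of the $A+1$ agents sit on distinct plaques is rigid on those $A$ agents (each plaque is covered by the single unit square centred on it), while the remaining ``excess'' agent is free to roam a $2$-dimensional patrolling region $\pat$. Indexing such a configuration by the labeling $C$ recording which agent occupies which plaque, I obtain one top cell $\pat$ (a polygon) for each full labeling $C$. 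Partial labelings produce the lower cells: specifying all plaques except a strip of collinear ones leaves a $1$-dimensional covering sub-problem along that strip, whose configuration space is a copy of $\mathtt{Skel}_{1}(\Pi_{k})$ for the appropriate $k$ (a ``hexagon'' in the $2\times 2$ case) glued to an edge of $\pat$; specifying still fewer plaques yields the $0$-cells, which are the rigid critical configurations. The local gluing is dictated precisely by how the $C$-labelings of adjacent cells restrict to one another.

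Second I would establish the homotopy equivalence $\cov_{G_{A}}(A+1)\simeq_{h}K$. A priori the configuration space sits inside $(G_{A})^{A+1}$ and has dimension larger than $2$, because the covering can be shared among several agents that are each only partially on their plaques, producing several simultaneous degrees of freedom. The point at which excess \emph{one} is essential is that the total area slack equals exactly one plaque, so this shared freedom can always be concentrated into a single agent: I would define a deformation that pushes agents onto the plaques for which they are responsible, one at a time, retracting each shared configuration onto a canonical one in which exactly one agent is excess. This deformation retraction kills the extra dimensions and identifies the image with $K$. I expect this to be the main obstacle, since it must be carried out uniformly over all connected domain shapes and must account for boundary and corner plaques, which have fewer neighbours and hence smaller (but still $2$-dimensional) patrolling regions.

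Finally, with $K$ in hand, I would collapse. The decisive combinatorial fact is that each top cell $\pat$ has a free face: an edge of $\partial\pat$ is a locus where the excess agent has moved far enough to relieve a neighbour, and crossing such an edge passes through a $1$-dimensional stratum (a hexagon), never directly into another $2$-cell. Thus distinct full labelings are always separated by lower-dimensional strata, so every boundary edge of $\pat$ is a face of no $2$-cell other than $\pat$ itself and is therefore free. Choosing one such free edge for each $\pat$ yields an acyclic matching (the chosen edges are distinct, as each lies on a single $\pat$, and all its arrows point from dimension $2$ to dimension $1$, so no alternating cycle can form), and by discrete Morse theory all $2$-cells are cancelled, collapsing $K$ onto a complex of dimension at most $1$. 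Since elementary collapses and deformation retractions preserve homotopy type, $\cov_{G_{A}}(A+1)$ is homotopy equivalent to a $1$-dimensional complex, as claimed.
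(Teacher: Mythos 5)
There is a genuine gap, and it sits exactly where the paper does its real work: at the interface between the two--dimensional patrolling regions and the one--dimensional permutation strata. You glue a copy of $\mathtt{Skel}_{1}(\Pi_{k})$ to $\pat$ as a purely $1$-dimensional stratum and then conclude that ``every boundary edge of $\pat$ is a face of no $2$-cell other than $\pat$ itself and is therefore free.'' But the vertices of $\pat$ along a crossing are \emph{critical configurations} (two agents overlapped), and these correspond to \emph{midpoints of edges} of the permutahedron skeleton, not to its vertices; an edge of $\pat$ is a patrol path between two critical configurations, which is not an edge of $\mathtt{Skel}_{1}(\Pi_{k})$ at all. To make the identification honest, the paper barycentrically subdivides $\mathtt{Skel}_{1}(\Pi_{n})$ and expands it by triangular $2$-cells to form $\cp'$; the single-shift edges of $\cp'$ --- each of which is a face of a triangle --- are what get identified with the edges of $\pat$. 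Consequently, in the correct cell structure \emph{no} edge of $\pat$ is free: every edge of $\pat_{p}$ lies on a crossing and is shared with a triangle of some $\cp'$ (the paper's own count for $G_{2\times 2}$ says each edge is shared by one $\pat$ and one hexagon, where the ``hexagon'' carries its six triangular cells). Your picture happens to be harmless for crossings of length $2$, because the ``dual'' graph of a hexagon is again a hexagon; for crossings of length $n\geq 3$ the skeleton of $\Pi_{n}$ has vertices of degree $n\geq 3$, the dual (line-graph) picture is no longer homotopy equivalent to $\mathtt{Skel}_{1}(\Pi_{n})$, and the triangles genuinely affect the answer --- they appear as $\#(\sigma_{t})$ in the Euler characteristic count. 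So the $2\times 2$ example you model your argument on is a misleading template.

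What this omission costs you is precisely the hard combinatorial step of the paper's proof: one must collapse the triangles \emph{first}, and only then do the edges of $\partial\pat_{p}$ become free so that the squares of $\pat_{p}$ can be collapsed inward plaque by plaque (note also that $\pat_{p}$ is a complex of unit squares, not a single polygonal cell, so one free edge per labeling cannot cancel all of its $2$-cells). Collapsing the triangles is nontrivial because at a permutahedron vertex of degree $n$ there are $\binom{n}{2}$ triangles but only $n$ half-edges, so for long crossings the free faces must be found among the \emph{swarm-shift} edges (patrol paths where two disjoint overlapped pairs move simultaneously, which bound a unique $2$-cell), and the whole matching must be checked to be acyclic --- this is the content of Lemma 3.2 and the appendix. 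On the positive side, your overall architecture does match the paper's (labelings $C$ indexing the $(A+1)!$ patrol regions, the fact that distinct $\pat$'s meet only in vertices, discrete Morse theory to kill the $2$-cells), and the obstacle you flag in your second step --- proving the semialgebraic configuration space is homotopy equivalent to the combinatorial model --- is a real one that the paper itself treats only lightly, via Theorem 1.1 and the gluing description. But as written, your collapse step asserts free faces that do not exist in the correct complex, so the proof does not go through without the modification $\cp'$ and the triangle-collapsing argument.
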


Since the essential topological feature of a $1$-dimensional complex is determined by the Euler characteristic, we have also showed that

\begin{thm}\label{t:Euler}
	The Euler characteristic of $\cov_{G_{A}}(A+1)$ is
	\[
	\chi(\cov_{G_{A}}(A+1))=(\chi(G_{A})-A/2)(A+1)!,
	\]
	where $\chi(G_{A})$ is the Euler characteristic for the domain $G_{A}$.
\end{thm}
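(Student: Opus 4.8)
The plan is to compute the Euler characteristic directly from the cell structure of $\cov_{G_A}(A+1)$, using that $\chi$ is a homotopy invariant and that the cell structure underlying Theorem~\ref{t:collapsible} has the patrolling regions $\pat$ as its only $2$-cells, the sliding edges of the $1$D covering subproblems (organized into hexagons $\simeq_h \mathtt{Skel}_1(\Pi_2)$) as its $1$-cells, and the critical configurations as its $0$-cells. Because the hexagons are $1$-dimensional and contribute no $2$-cells, the alternating sum collapses to $\chi(\cov_{G_A}(A+1)) = \#(\pat) - \#(\text{edges}) + \#(\text{vertices})$, exactly as in the $G_{2\times 2}$ computation. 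The task is then to count each class of cell, and I expect every count to factor as a combinatorial invariant of the domain times $(A+1)!$, the factorial recording the labelling of the agents.

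The three counts I would establish are as follows. A critical configuration is forced to be rigid precisely when two agents share one plaque center while the remaining $A-1$ agents occupy the remaining $A-1$ plaque centers, the direct analogue of the six vertices of the $1$D hexagon; choosing the doubled plaque, the unordered pair of agents on it, and the bijection of the rest gives $\#(\text{vertices}) = A \cdot \binom{A+1}{2} \cdot (A-1)! = \tfrac{A}{2}(A+1)!$, which depends only on $A$. Each interior edge of the grid is shared by two plaques forming a $1\times 2$ strip carrying a $1$D covering problem whose configuration space is one hexagon for each of the $P(A+1,A-2) = (A+1)!/6$ ways of tiling the other $A-2$ plaques; since each edge lies in a unique hexagon and each hexagon has six edges, $\#(\text{edges}) = 6 \cdot E_{\mathrm{int}} \cdot (A+1)!/6 = E_{\mathrm{int}}(A+1)!$, where $E_{\mathrm{int}}$ is the number of interior edges. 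Finally a genuinely two-dimensional patrolling region occurs exactly at an interior (degree-$4$) vertex $v$, where the excess agent sweeps a $2$D neighborhood of $v$ while the four agents covering the adjacent plaques adjust and the remaining $A-4$ agents tile; the $(A+1)!$ labellings then give $\#(\pat) = V_{\mathrm{int}}(A+1)!$, with $V_{\mathrm{int}}$ the number of interior vertices. I would stress that these three counts are each justified on their own and do not rely on any incidence relation between cells, since (unlike for $G_{2\times 2}$) the way hexagons and patrolling regions meet is genuinely domain-dependent.

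Assembling the counts yields $\chi(\cov_{G_A}(A+1)) = \bigl(V_{\mathrm{int}} - E_{\mathrm{int}} + \tfrac{A}{2}\bigr)(A+1)!$, so it remains to identify the bracketed factor with $\chi(G_A) - A/2$. Writing $V, E$ and $F = A$ for the numbers of vertices, edges and plaques of $G_A$, we have $\chi(G_A) = V - E + A$, and hence the claim is equivalent to $V_{\mathrm{int}} - E_{\mathrm{int}} = V - E$, that is, to the statement that $G_A$ has as many boundary vertices as boundary edges. This last equality holds because $\partial G_A$ is a disjoint union of simple closed polygonal curves built from unit edges, and on each such cycle the number of edges equals the number of vertices; substituting $V - E = \chi(G_A) - A$ converts the bracket into $\chi(G_A) - A/2$ and finishes the proof.

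The main obstacle is not this arithmetic but the justification of the cell structure itself: I must show that the three families above exhaust the cells, that every patrolling region is attached to exactly one interior vertex (so that the count $(A+1)!$ per vertex carries no symmetry correction), and that every sliding edge belongs to a unique hexagon. The subtle point is that the clean local incidences seen for $G_{2\times 2}$ — each edge on one $\pat$ and one hexagon, each vertex on two hexagons — already fail numerically for a domain such as $G_{2\times 3}$, so the argument must count the open cells intrinsically rather than through their gluing. A secondary issue requiring care is the boundary identity for domains that are not simply connected or that have pinch points, where $\partial G_A$ must be decomposed into its boundary cycles before the equality of boundary vertices and edges can be invoked.
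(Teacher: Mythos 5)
Your computation is correct and arrives at the same intermediate identity as the paper, namely $\chi(\cov_{G_A}(A+1))=\bigl(K+\tfrac A2-\sum_i n_i(i-1)\bigr)(A+1)!$ with $K$ the number of unit patrol squares per labelling, but by a genuinely different route, and the differences are worth recording. First, the paper counts the cells of the full complex $\mathcal{K}$, including the permutahedron apparatus glued along each crossing of length $i$ (permutahedron vertices, subdivision half-edges, swarm-shift edges, triangular $2$-cells, each weighted by $P(A+1,A-i)n_i$), and verifies that these extra families cancel identically in the alternating sum; you instead count the collapsed ``dual'' structure whose only cells are critical configurations, single slides inside $1\times2$ strips, and unit patrol squares. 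Be aware that this is \emph{not} literally ``the cell structure underlying Theorem~\ref{t:collapsible}'': it is what $\mathcal{K}$ becomes after the triangle collapses of Lemma~\ref{lem:collapsible} remove each expansion $\cp'$ star by star (equivalently, after noting each $\cp'$ contributes zero net Euler characteristic). You flag this obligation yourself, and the paper's construction plus its appendix matching supplies exactly the missing justification, so it is a borrowed step rather than a gap. Second, and more substantively, you replace the paper's geometric lemma $K=1-A+\sum_i n_i(i-1)-g$, proved there by an inductive block-by-block construction of $G_A$ and of its holes, with the intrinsic identifications $K=V_{\mathrm{int}}$ (a unit $2$-cell of $\pat$ is precisely a $2\times2$ block of plaques, i.e.\ an interior grid vertex) and $\sum_i n_i(i-1)=E_{\mathrm{int}}$, reducing the theorem to the boundary identity $V_{\mathrm{bdry}}=E_{\mathrm{bdry}}$ for a disjoint union of simple closed boundary curves. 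This is shorter, avoids the paper's somewhat delicate induction, and buys something the induction obscures: it isolates the exact hypothesis under which the theorem holds. For a domain with a pinch vertex (two diagonal plaques present, two absent) one has $E_{\mathrm{bdry}}>V_{\mathrm{bdry}}$ and the stated formula fails --- e.g.\ for two plaques meeting only at a corner, $\cov$ is six isolated points with $\chi=6$, while $(\chi(G_A)-A/2)(A+1)!=0$ --- and indeed the paper's own induction is shaky precisely there, since its ``$\sigma_{s_i}\cap\sigma_{s_{i+1}}$ is a vertex'' case holds $g$ fixed while the rank of $H_1(G_A)$ actually increases. Your intermediate form $\bigl(V_{\mathrm{int}}-E_{\mathrm{int}}+\tfrac A2\bigr)(A+1)!$ remains valid in that case, so your route proves the theorem as stated for pinch-free domains and, as a bonus, exhibits the correct general formula and the reason the hypothesis is needed.
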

% subsection main_results (end)
% section introduction (end)

\section{BASICS} % (fold)
\label{sec:basics}
Before we give the proof, we begin with some definitions in this section for preparation. 

\begin{definition}
	A \textbf{polyhedral complex} $\cp$, as a special kind of cell complex, is a collection of convex polytopes such that
	\begin{enumerate}
		\item every face of a polytope in $\cp$ is a polytope itself in $\cp$;
		\item the intersection of any two polytopes in $\cp$ is a face of each of them.
	\end{enumerate}
\end{definition}
Later we will see that $\cov_{G_{A}}(A+1)$ has the structure of a polyhedral complex. 

\begin{definition}
	Let $\cp$ be a polyhedral complex. A \textbf{free face} of $\cp$ is some $\tau\in\cp$ such that there exists one and only one $\sigma\in \cp$ with $\tau\subset\sigma$. In particular, $\dim\tau<\dim\sigma$. An \textbf{elementary collapse} of $\cp$ is the removal of the pair $(\tau,\sigma)$ when $\dim\tau=\dim\sigma-1$.
\end{definition}

\begin{definition}
	A polyhedral complex $\cp_{1}$ \textbf{collapses} to another polyhedral complex $\cp_{2}$ if there exists a sequence of elementary collapses, we write $\cp_{1}\searrow\cp_{2}$. An \textbf{expansion} is the operation inverse to the collapse. We say $\cp_{1}$ is the expansion of $\cp_{2}$ and write $\cp_{2}\nearrow\cp_{1}$.
\end{definition}

The following proposition is well-known:
\begin{prop}\label{prop:collapse}
	A sequence of collapses yields a strong deformation retraction, in particular, a homotopy equivalence.
\end{prop}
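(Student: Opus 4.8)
The plan is to prove the statement in two stages: first reduce the claim about an arbitrary collapse to the claim about a single elementary collapse, and then settle the elementary case by an explicit local retraction. By definition a collapse $\cp_{1}\searrow\cp_{2}$ is a finite sequence of elementary collapses $\cp_{1}=\cp^{(0)}\searrow\cp^{(1)}\searrow\cdots\searrow\cp^{(m)}=\cp_{2}$, so it suffices to show (i) that each elementary collapse induces a strong deformation retraction of underlying spaces, and (ii) that a finite concatenation of strong deformation retractions is again one. For (ii), if $r$ is a strong deformation retraction of $|\cp^{(i-1)}|$ onto $|\cp^{(i)}|$ and $r'$ one of $|\cp^{(i)}|$ onto $|\cp^{(i+1)}|$, then running $r$ on $[0,\tfrac12]$ followed by $r'$ on $[\tfrac12,1]$ is continuous by the pasting lemma and fixes $|\cp^{(i+1)}|$ throughout, since $r$ already fixes the larger set $|\cp^{(i)}|$; iterating handles all $m$ steps. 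The final clause ``in particular, a homotopy equivalence'' is then immediate, as a strong deformation retraction $r$ exhibits the inclusion $|\cp_{2}|\hookrightarrow|\cp_{1}|$ as a homotopy inverse of $r(\cdot,1)$.

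For the elementary case, consider the removal of a free-face pair $(\tau,\sigma)$ with $\dim\tau=\dim\sigma-1$, and set $\cp'=\cp\setminus\{\tau,\sigma\}$. Because $\tau$ is free, $\sigma$ is the only cell of $\cp$ properly containing $\tau$, and $\sigma$ is itself maximal (any cell properly containing $\sigma$ would also properly contain $\tau$, contradicting freeness). Hence the relative interiors $\mathrm{relint}\,\tau$ and $\mathrm{relint}\,\sigma$ lie in no cell of $\cp'$, the underlying spaces decompose as $|\cp|=|\cp'|\sqcup\mathrm{relint}\,\tau\sqcup\mathrm{relint}\,\sigma$, and $\sigma\cap|\cp'|=\overline{\partial\sigma\setminus\mathrm{relint}\,\tau}$, the union of all proper faces of $\sigma$ other than $\tau$. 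Since $\dim\tau=\dim\sigma-1$, the face $\tau$ is a facet of the convex polytope $\sigma$, and the heart of the argument is the geometric fact that $\overline{\partial\sigma\setminus\mathrm{relint}\,\tau}$ is a strong deformation retract of $\sigma$ --- one ``pushes $\sigma$ off its open facet $\tau$ onto the remaining boundary faces.'' Concretely, letting $H\supset\tau$ be the supporting hyperplane of $\sigma$ along $\tau$ and placing a point $q$ just beyond $\tau$ on the outer side of $H$, radial projection from $q$ sends each point of $\sigma$ along the ray from $q$ to the point where that ray exits $\sigma$; the affine functional cutting out $H$ strictly decreases along such rays, so the exit point cannot lie in the open facet $\tau$, and the straight-line homotopy onto these exit points is a strong deformation retraction that is stationary on $\overline{\partial\sigma\setminus\mathrm{relint}\,\tau}$. (For a simplex $\sigma=w\ast\tau$ this is just the standard retraction $w\ast\tau\searrow w\ast\partial\tau$.)

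It then remains to globalize: define the homotopy to be this retraction on $\sigma$ and the identity on $|\cp'|$. The two prescriptions agree on the overlap $\sigma\cap|\cp'|=\overline{\partial\sigma\setminus\mathrm{relint}\,\tau}$, on which the local homotopy is stationary, so by the pasting lemma they glue to a continuous strong deformation retraction of $|\cp|$ onto $|\cp'|$, completing the elementary case.

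I expect the main obstacle to be this elementary step, and within it the careful verification that the polytope retraction is continuous up to the boundary of $\sigma$ --- in particular near $\partial\tau$, where rays through $q$ degenerate --- and is genuinely stationary on the attaching set $\overline{\partial\sigma\setminus\mathrm{relint}\,\tau}$, since only then does it glue with the identity on $|\cp'|$. For a general, non-simplicial polytope $\sigma$ one must also confirm that a single apex $q$ can be chosen so that every ray exits off $\tau$; phrasing the retraction instead via an affine homeomorphism of the pair $(\sigma,\tau)$ onto a standard model disposes of this but still requires attention. The concatenation in step (ii) and the pasting in the globalization are then routine once the local model is in hand.
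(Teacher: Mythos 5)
The paper offers no argument for this proposition at all: it is stated as ``well-known'' and used as a black box (implicitly backed by standard simple-homotopy theory, and by the Kozlov/Forman references invoked in the appendix for the more general acyclic-matching machinery). Your proposal therefore cannot coincide with the paper's proof; instead it supplies the standard textbook argument, and it is correct. The reduction to a single elementary collapse plus concatenation of strong deformation retractions is routine and handled properly (your observation that the first-stage homotopy fixes the larger subcomplex $|\cp^{(i)}|$, hence a fortiori $|\cp^{(i+1)}|$, is exactly the point). The elementary case is also set up correctly: freeness forces $\sigma$ to be maximal, so $\cp'=\cp\setminus\{\tau,\sigma\}$ is again a complex, $|\cp|$ decomposes as $|\cp'|\sqcup\mathrm{relint}\,\tau\sqcup\mathrm{relint}\,\sigma$, and $\sigma\cap|\cp'|$ is the union of the proper faces of $\sigma$ other than $\tau$ (note this set contains $\partial\tau$, since each codimension-$2$ face of $\sigma$ in $\partial\tau$ lies in a second facet). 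The one step you rightly flag as delicate --- existence of the apex and continuity of the radial projection --- is settled by the classical device of choosing $q$ \emph{beyond} the facet $\tau$ and \emph{beneath} every other facet hyperplane (take $q$ slightly past a point of $\mathrm{relint}\,\tau$; since $\mathrm{relint}\,\tau$ is strictly on the $\sigma$-side of every other facet hyperplane, such $q$ exists): then the exit time is a minimum of finitely many affine-rational expressions in $x$, which is continuous on $\sigma$, including at points of $\partial\tau$ where entry and exit coincide, and your monotonicity argument for the functional cutting out $H$ shows the exit point avoids $\mathrm{relint}\,\tau$ and that the projection is stationary on the attaching set. So your proof is complete in all essentials and, unlike the paper, self-contained; the paper's choice buys brevity at the cost of leaving the homotopy-theoretic foundation of Lemma~\ref{lem:collapsible} and the appendix entirely to the literature.
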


In proving the 2D case, the following labeling notations are given for our convenience.

Since each plaque in the 2D domain must be covered by at least one point; in order to specify particular covering configurations and keep tract of them, we use $\mathcal{C}_{N}$ to define the labeling set
\[
	\{C=\left[\begin{smallmatrix}
	c_{11} & c_{12} & \ldots & c_{1n}\\
	\vdots & \vdots & \ddots & \vdots\\
	c_{m1} & c_{m2} & \ldots & c_{mn}
	\end{smallmatrix}\right]:c_{ij}\subsetneq [N],\; \bigsqcup_{ij} c_{ij}=[N]\},
\]
where $\bigsqcup$ stands for disjoint union and $[N]$ denotes the set $\{1,\dots,N\}$. 

Working on a grid domain $G_{A}$ with arbitrary shape, we shall assume $G_{m\times n}$ is the minimal rectangular grid domain that covers $G_{A}$. For the pair of $i,j$ corresponding to square plaque outside $G_{A}$, we simply ignore $c_{ij}$ in that position. Other $c_{ij}$'s corresponding to positions of different plaques in $G_{A}$ will be \emph{active}.

Reading off the active $c_{ij}$ tells us which point(s) is/are used to cover the plaque on $G_{A}$ at the $i$th row and the $j$th column of the rectangular grid domain $G_{m\times n}$. For example, when $m=2,n=3,N=7$, $\left[\begin{smallmatrix}
\{1\} & \{3\} & \{2\}\\
\{4\} & \{5\} & \{6,7\}
\end{smallmatrix}\right ]$ stands for a covering configuration that points $1,2,3,4,5$ are taking care of their own plaques, while $6,7$ together cover the plaque in the lower right corner. $C=\left[\begin{smallmatrix}
c_{11} & c_{12} & c_{13}\\
\{4\} & \{5\} & \{6\}
\end{smallmatrix}\right ]$ with the active $c_{11},c_{12},c_{13}$ unspecified corresponds to the situation that we have points $4,5,6$ covering the second row of plaques, and the first row of plaques are covered up by points from $\{1,2,3,7\}$; in this case, Theorem~\ref{t:1D} indicates that the minimal covering configuration space traversing $\{\left[\begin{smallmatrix}
c_{11} & c_{12} & c_{13}\\
\{4\} & \{5\} & \{6\}
\end{smallmatrix}\right ]\}$ has the homotopy type of $\mathtt{Skel}_1(\Pi_3)$. 

Given a covering configuration $\mathsmaller{C=\left[\begin{smallmatrix}
	a_{11} & a_{12} & \ldots & a_{1n}\\
	\vdots & \vdots & \ddots & \vdots\\
	a_{m1} & a_{m2} & \ldots & a_{mn}
	\end{smallmatrix}\right]}$, where each $a_{ij}$ is a singleton except for the inactive ones, the free region $\pat_{p}$ for the excess covering agent $p\notin a_{ij}$ is defined as follows:
	
	\begin{definition}
		The \textbf{free patrolling region} $\pat_{p}$ for $p$ is a $2$ dimensional polyhedral complex such that
		\begin{itemize}
			\item the $0$-cells $v_{(i,j)}$ corresponds to fixed covering point $a_{ij}$.
			\item the $1$-cells are edges $\{v_{(i,j)},v_{(i,j+1)}\}$ and $\{v_{(i,j)},v_{(i+1,j)}\}$ that connect neighboring covering points.
			\item the $2$-cells are unit square plaques $\{v_{(i,j)},v_{(i,j+1)},v_{(i+1,j)},v_{(i+1,j+1)}\}$ that are glued in along its boundary edges.
		\end{itemize}
	\end{definition}
\begin{remark}[On the labeling]
	\begin{enumerate}
		\item Note that there are $(A+1)!$ such free patrol regions, since the excess one is identified whenever we have made a choice of the fixed covering points. It is natural for us to use 
		$\mathsmaller{\left[\begin{smallmatrix}
		a_{11} & a_{12} & \ldots & a_{1n}\\
		\vdots & \vdots & \ddots & \vdots\\
		a_{m1} & a_{m2} & \ldots & a_{mn}
		\end{smallmatrix}\right]}$ 
		to label $\pat_{p}$.
		\item We shall also use 
		$\mathsmaller{\left[\begin{smallmatrix}
		a_{11} & a_{12} & \ldots & a_{1n}\\
		\vdots & \vdots & a_{ij}\cup \{p\} & \vdots\\
		a_{m1} & a_{m2} & \ldots & a_{mn}
		\end{smallmatrix}\right]}$ 
		to label the $0$-cell $v_{(i,j)}$. This allows us to identify different $0$-cells on different $\pat_{p}$'s.
	\end{enumerate}
\end{remark}	
	In principle, we can use the devised labeling notations to label all the cells in the complex for the covering configuration space. The right way of gluing for the covering configuration space means a consistent labeling from cells of 0 dimension to the working dimension, which shall be made clear in the proof.

Finally, the \emph{crossings} of $\pat_{p}$ are maximal horizontal (resp. vertical) lines traversing the horizontal (resp. vertical) edges. Let the \emph{length} of a crossing be the number of vertices on it.
% section basics (end)
\section{PROOF} % (fold)
\label{sec:proof}
\textbf{Construction of $\cov_{G_{A}}(A+1)$}:

Our construction of $\cov_{G_{A}}(A+1)$ can be described as a \emph{gluing} process. The basic idea is to glue the $2$ dimensional complex of free patrol region with the $1$ skeleton of permutahedrons along the crossing lines. We shall take advantage of Theorem~\ref{t:1D}, but in order to facilitate the glueing, we first need to present the  modification step for $\mathtt{Skel}_{1}(\Pi_{n-1})$:

We denote $\mathtt{Skel}_{1}(\Pi_{n-1})$ as a polyhedral complex $\cp$. In order to `patrol' on the $1$ skeleton of permutahedrons, we construct the modification $\cp'$ as follows:
\begin{enumerate}
	\item We first have a \emph{barycentric subdivision} of $\cp$ denoted as $S\cp$. The mid-points on each edge of $\cp$ are $0$-cells of $S\cp$ now.  
	\item We construct the expansion of $S\cp$ by adding all pairs $(e,t)$ with $e$ and $t$ satisfying the following conditions:
	\begin{itemize}
		\item For two edges of $\cp$ which are connected to a vertex $v$ of $\cp$, $e$ is the edge connecting the mid-points on them. 
		\item $t$ is triangular $2$-cell that contains $e$ and the vertex $v$. 
	\end{itemize} 
\end{enumerate}
Denote the resulting $2$ dimensional polyhedral complex as $\cp'$. Since $\cp'\nearrow S\cp$, we immediately have 

\begin{lem}
	$\cp'$ is homotopy equivalent to $\cp$.
\end{lem}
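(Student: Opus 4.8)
The plan is to establish the homotopy equivalence $\cp' \simeq_h \cp$ by exhibiting a sequence of elementary collapses from $\cp'$ down to $S\cp$, then invoking the standard facts that collapses are homotopy equivalences and that barycentric subdivision preserves homotopy type. First I would make the relation $\cp' \nearrow S\cp$ precise. By construction, $\cp'$ is obtained from $S\cp$ by adjoining, for each vertex $v$ of the original complex $\cp$ and each pair of edges of $\cp$ incident to $v$, a new edge $e$ joining the two midpoints together with a triangular $2$-cell $t$ spanned by $e$ and $v$. The key observation is that each such added edge $e$ is a \emph{free face} of the corresponding $2$-cell $t$: the edge $e$ lies in the boundary of exactly one $2$-cell of $\cp'$, namely $t$, since $e$ was introduced solely as part of the pair $(e,t)$ and is not shared with any other cell. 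I would verify this carefully by checking that the other two edges of $t$ are the halved edges of $\cp$ running from $v$ to the respective midpoints, and these already belong to $S\cp$.

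Having identified the free faces, the collapse proceeds by performing, for every adjoined pair $(e,t)$, the elementary collapse removing $(e,t)$, as permitted by Proposition~\ref{prop:collapse} since $\dim e = \dim t - 1 = 1$. Because the added pairs $(e,t)$ are pairwise disjoint in their interiors and no $e$ is a face of any $t' \neq t$, these elementary collapses can be carried out independently and in any order; after removing all of them we are left with exactly the cells of $S\cp$. This yields $\cp' \searrow S\cp$, and by Proposition~\ref{prop:collapse} the collapse is a strong deformation retraction, hence a homotopy equivalence $\cp' \simeq_h S\cp$.

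Finally I would combine this with the classical fact that a barycentric subdivision does not change the underlying topological space: $S\cp$ and $\cp$ have homeomorphic geometric realizations, so in particular $S\cp \simeq_h \cp$. Chaining the two equivalences gives $\cp' \simeq_h S\cp \simeq_h \cp$, which is the assertion of the lemma.

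I expect the main obstacle to be the bookkeeping that confirms the freeness of each $e$, i.e. verifying that the newly introduced edges and triangles interact correctly at each vertex $v$ without any $e$ bounding two triangles or coinciding with an existing $1$-cell of $S\cp$. This is essentially a local combinatorial check at each vertex of $\cp$, complicated slightly by the fact that a vertex of $\Pi_{n-1}$ may have degree greater than two, so one must confirm that a \emph{distinct} pair $(e,t)$ is created for each \emph{pair} of incident edges and that these triangles meet only along the pre-existing half-edges of $S\cp$. Once this local picture is pinned down the collapse is routine, so the essential content of the lemma is precisely the statement that the expansion is built from genuinely free faces.
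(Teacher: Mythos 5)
Your proof is correct and takes essentially the same route as the paper: the paper's entire argument is the single observation that $\cp'\nearrow S\cp$ together with Proposition~\ref{prop:collapse}, and your write-up merely makes explicit what the paper leaves implicit — that each added edge $e$ is a genuinely free face of its triangle $t$ (which holds since $\mathtt{Skel}_{1}(\Pi_{n-1})$ is a simple graph, so a pair of edges shares at most one vertex and each midpoint-edge bounds exactly one triangle) and that barycentric subdivision leaves the underlying space unchanged. No gap; your local freeness check at each vertex is exactly the verification the paper's one-line proof presupposes.
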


This visual change yields advantage in the glueing process. This is because edges in $\cp$ corresponds to adjacent transpositions. The mid-points on edges of $\cp$ can be used to represent critical covering configurations that two points (the permuted pair) are overlapped. The edges in $\cp'$ whose boundary are those mid-points (denoted as $e_{v_{i}v_{j}}$) shall be treated as patrolling paths from one critical covering configuration to another. Indeed, if we look at all the edges in $\cp$ whose boundary contains the vertex $v=123\cdots n$, they (or alternatively, the midpoints) can be represented as $[(12)3\cdots n]$, $[1(23)\cdots n]$, $\dots$, $[123\cdots (n-1,n)]$, the points in bracket together cover a fraction of line segment while the others cover one fraction by themselves. Hence these vertices form a set of critical covering configurations for the line segment.

There are two types of patrol paths represented by $e_{v_{i}v_{j}}$:
\begin{itemize}
	\item If we take $v_{1}=[(12)3\cdots n]$, $v_{2}=[1(23)\cdots n]$, $e_{v_{1}v_{2}}$ represents the path for point $2$ patrolling from $1$ to $3$. We call such $e_{v_{1}v_{2}}$ representing a \emph{single shift}, denote the set of all single shift edges as $E_{1}$.
	\item If we take $v_{1}=[(12)3\cdots n]$, $v_{2}=[12(34)\cdots n]$, $e_{v_{1}v_{2}}$ represents the patrol path for both $2$ and $3$ synchronously moving to the right from configuration $v_{1}$ to $v_{2}$. This involving more than one point movement, hence we call such $e_{v_{1}v_{2}}$ representing a \emph{swarm shift}, denote the set of all swarm shift edges as $E_{2}$. 
\end{itemize}

When the excess agent travels along crossings of the free patrol region $\pat$, locally it travels on lines from one critical covering configuration to another, therefore, edges representing single shift in $\cp'$ can be identified with edges in $\pat$.

Now we can describe the glueing process for $\cov_{G_{A}}(A+1)$ as the polyhedral complex $\mathcal{K}$:

\begin{enumerate}
	\item Start with the $1$-skeleton of $\pat_{p}$ for an arbitrary patroller $p$. Note that there are $(A+1)!$ such complexes. On $\pat_{p}$ we can list all the crossings. For the $i$th crossing on $\pat_{p}$ with length $n$, we have $\mathtt{Skel}_{1}(\cp')$, where $\cp'$ is the expansion of $S\mathtt{Skel}_{1}(\Pi_{n})$.  The edges on $\mathtt{Skel}_{1}(\cp')$ that represent single shift shall be identified with edges on the crossing of $\pat_{p}$. Denote the new space as $\mathcal{K}_{p}^{i}=\mathtt{Skel}_{1}(\pat_{p})\cup_{E_{1}}\mathtt{Skel}_{1}(\cp')$.
	\item We glue $\mathcal{K}_{p}^{i}$ together for different $i$'s by identifying $\mathtt{Skel}_{1}(\pat_{p})$ that is common to them. 
	\item We glue $\mathcal{K}_{p}$ together for different $p$'s by identifying the vertices that have the same label. 
	\item Finally, glue in the $2$-cells of $\pat_{p}$ and $2$-cells of $\cp'$ along their boundaries. The resulting polyhedral complex $\Delta$ is the covering configuration space we are after. 
\end{enumerate}

The reader can refer figure~\ref{fig:2by2glue} for some illustrations. 

\noindent\textbf{Collapsibility:}

The important observation of the existence of free edges leads us to the following collapsibility:  
\begin{lem}\label{lem:collapsible}
	$\mathcal{K}$ is collapsible to a $1$-dimension complex.	
\end{lem}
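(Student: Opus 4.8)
The plan is to exhibit an explicit sequence of elementary collapses that removes every $2$-cell of $\mathcal{K}$, leaving a $1$-dimensional complex. Recall that $\mathcal{K}$ has exactly two kinds of $2$-cells: the unit square plaques of each free patrolling region $\pat_{p}$, and the triangular cells $t$ of each modified permutahedron skeleton $\cp'$. My first step is to handle the square plaques. For a fixed $\pat_{p}$, each $2$-cell (a unit square) is glued in along its four boundary edges, but I claim each such square possesses a \emph{free edge}: an interior edge of the crossing structure that bounds only that single square. Concretely, walk along a crossing of $\pat_{p}$ of length $n$; the horizontal edges of that crossing are identified (via $E_{1}$) with single-shift edges of $\cp'$, but the vertical edges interior to a plaque need not be so identified, and one of the four edges of each square belongs to no other $2$-cell. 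I would make this precise by choosing, on each $\pat_{p}$, a systematic orientation (say, collapse from the rightmost column leftward, or along a spanning-tree ordering of the squares) so that at each stage the next square to be removed has a genuinely free edge. Performing the pair-removals $(\text{free edge}, \text{square})$ one at a time collapses all square plaques of every $\pat_{p}$.

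Next I would collapse the triangular cells of the $\cp'$ pieces. By the construction of $\cp'$ as the expansion $\cp'\nearrow S\cp$, each triangle $t$ was added together with an edge $e$ (the edge joining the two midpoints adjacent to a vertex $v$ of $\cp$) precisely as a free pair. After the subdivision $S\cp$, the edge $e$ lies on the boundary of exactly one triangle $t$, so $(e,t)$ is by definition an elementary collapse. Crucially, the edges that got identified with crossings of $\pat_{p}$ are the \emph{single-shift} edges $E_{1}$, whereas the edges $e$ paired with the triangles are the newly added midpoint-connecting edges; these are disjoint classes, so removing the pairs $(e,t)$ does not disturb the identifications made along $E_{1}$, nor does it reintroduce any dependence on the already-removed square plaques. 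Applying these collapses across every crossing of every $\pat_{p}$ eliminates all triangular $2$-cells.

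Once both families of $2$-cells are gone, what remains is a subcomplex of dimension at most $1$, and by Proposition~\ref{prop:collapse} the whole sequence of elementary collapses is a strong deformation retraction; hence $\mathcal{K}$ is collapsible to a $1$-dimensional complex, as claimed.

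The main obstacle I anticipate is the bookkeeping in the first step: I must verify that the square plaques can be removed in an order for which a free edge always exists at each stage, \emph{after} accounting for the gluings in items (2)--(3) of the construction, where distinct $\pat_{p}$'s are identified along shared $1$-skeleta and along equally-labeled $0$-cells. The danger is that an edge which looks free inside a single $\pat_{p}$ might secretly be shared with a square of a neighboring $\pat_{q}$ or with a triangle of some $\cp'$. I would resolve this by checking that only $0$-cells (vertices) and single-shift edges are ever identified across different $\pat_{p}$'s, so that the interior edges used as free faces remain unshared; once this locality is established, the collapsing order on each $\pat_{p}$ can be chosen independently and the two families of collapses commute.
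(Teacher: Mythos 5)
Your proposal has a genuine gap, and it lies exactly where you anticipated trouble: the claim in your first step, that each square plaque of $\pat_{p}$ possesses a free edge at the outset, is false. By definition the crossings of $\pat_{p}$ traverse \emph{all} of its horizontal and vertical edges, and step (1) of the gluing construction identifies every edge of every crossing with a single-shift edge of the corresponding $\cp'$. A single-shift edge is, by the expansion construction, a boundary edge of the triangular $2$-cell $t$ that was added together with it. Hence every one of the four edges of every square plaque --- interior or boundary, horizontal or vertical --- is also a face of a triangle of some $\cp'$, and no ordering of the squares (rightmost-column-first, spanning tree, or otherwise) can produce a free edge before the triangles are removed. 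Your proposed resolution (``only $0$-cells and single-shift edges are ever identified across different $\pat_{p}$'s, so the interior edges used as free faces remain unshared'') addresses sharing between \emph{different} patrolling regions, which is indeed harmless since $\pat_{p}\cap\pat_{q}$ is at most a set of vertices; but it misses the sharing between $\pat_{p}$ and its \emph{own} glued-in $\cp'$'s, which is ubiquitous: there is no edge of $\pat_{p}$ that fails to lie on a crossing.

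The paper's proof runs your two stages in the opposite order, and the order is essential. The triangles must go first, and even that needs care, because the single-shift edges are not free in $\mathcal{K}$ (they bound squares). For crossings long enough that swarm shifts exist, the swarm-shift edges bound only their own triangle and no square, so they are genuinely free; collapsing those pairs $(e,t)$ first liberates the half-edges of the barycentric subdivision $S\cp$, which then serve as free faces for the remaining triangles (for short crossings, e.g.\ the hexagons $\mathtt{Skel}_{1}(\Pi_{2})$, one starts directly with a half-edge incident to a vertex of $\cp$, which bounds a unique triangle). Only after all triangles are gone do the edges on $\partial\pat_{p}$ become free faces of the boundary squares, and one collapses the squares working inward, which is legitimate precisely because distinct $\pat$'s share only vertices. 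Your second step, \emph{conditional} on the squares already being removed, is sound as far as it goes, but since your first step cannot be executed, the argument as ordered does not go through; reversing the two stages and adding the swarm-shift/half-edge bookkeeping inside each $\cp'$ recovers the paper's proof.
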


\begin{proof}[Sketch of the proof:]
	We prove by giving the sequence of collapse: first for triangular $2$-cells in $\cp'$: the expansion of 1 skeleton of $\Pi_{n}$'s, where $n$ is the length of different crossings in $\pat_{p}$.  When $\cp=\mathtt{Skel}_{1}\Pi_{n}$ has less than $3!$ vertices, we can take the collapsible pair (e,t) with e being the edge with one of its boundary point at a vertex of $\cp$. For $\cp$ having at least $4!$ vertices, the expansion $\cp'$ has at least one collapsible pair $(e,t)$ with $e$ not shared by other $2$-cells from either $\cp'$ or $\pat$. Such $e$ represent a swarm shift on $\pat$. After the removal of $(e,t)$, edges belonging to the subdivision of $\cp$ on $t$ become free. Therefore, we have a sequence of elementary collapses for all triangular $2$-cells in $\cp'$. 
	
	Note that every $\pat$ is identified with some excess agent while the other covering points are fixed. Permutation only happens when it is overlapped with some other covering point, therefore, $\pat_{p}$ for excess agent $p$ is identified with $\pat_{q}$ for excess agent $q$ by a vertex if and only if they are overlapped at the vertex; the edges on $\pat_{p}$ are never shared by 2-cells from $\pat_{q}$, for $q\neq p$. Therefore, working from plaques with free edges on $\pat_{p}$ to plaques surrounded by other plaques, we can have a sequence of collapses for $\pat_{p}$, leading to 1 dimensional polyhedral complex. 
\end{proof}

Theorem~\ref{t:collapsible} instantly follows. We put the complete proof (using partial matching arguments) in the appendix. 

Next, one derives the Euler characteristics by counting the number of cells in different dimensions. 

Note that 
$\mathsmaller{\pat_{p}=\left[\begin{smallmatrix}
	a_{11} & a_{12} & \ldots & a_{1n}\\
	\vdots & \vdots & \ddots & \vdots\\
	a_{m1} & a_{m2} & \ldots & a_{mn}
	\end{smallmatrix}\right ]}$ and $\mathsmaller{\pat_{q}=\left[\begin{smallmatrix}
	a_{11}' & a_{12}' & \ldots & a_{1n}'\\
	\vdots & \vdots & \ddots & \vdots\\
	a_{m1}' & a_{m2}' & \ldots & a_{mn}'
	\end{smallmatrix}\right]}$ are glued by a vertex if and only if $a_{ij}=a_{ij}'$ 
	for all but one pair of $i$ and $j$.

\noindent\textbf{Counting:}

Combining all the above lemma, we set forward to counting the total number of cells of $\mathcal{K}$ in different dimensions. 

Suppose $G_{A}$ is a 2 dimensional domain with Euler characteristic $1-g$, it has the homotopy type of a disk with $g$ points removed. We have the following lemma first.

\begin{lem}
	Suppose there are $n_{i}$ crossings of length $i$ in total on the free patrolling region $\pat$ and the area of $\pat$ is K, $\chi(G_{A})=1-g$, then we have 
	\[
		K=1-A+\sum_{i}n_{i}(i-1)-g.
	\]
\end{lem}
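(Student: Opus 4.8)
The plan is to recognize the asserted identity as a disguised Euler-characteristic computation and to reduce it to the single topological fact $\chi(\pat)=\chi(G_A)$. First I would read off the three cell-counts of the $2$-dimensional polyhedral complex $\pat$. Its $0$-cells are exactly the fixed covering points $a_{ij}$; since the generic configuration assigns one label to each of the $A$ active plaque-positions and reserves the remaining label for the excess agent, there are precisely $A$ of them, so the number of $0$-cells is $A$. Its $2$-cells are by definition unit squares of area one, so the number of $2$-cells is $K$. For the $1$-cells I would use the crossings: every horizontal (resp.\ vertical) edge lies on a unique maximal horizontal (resp.\ vertical) crossing, and a crossing of length $i$ is a path through $i$ vertices, hence carries exactly $i-1$ edges (length-one crossings contribute nothing); summing over all crossings gives $\sum_i n_i(i-1)$ edges. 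With these three counts, $\chi(\pat)=A-\sum_i n_i(i-1)+K$, so the claimed formula is algebraically equivalent to the assertion $\chi(\pat)=1-g=\chi(G_A)$.

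The core of the proof is therefore to establish $\chi(\pat)=\chi(G_A)$, and I would do this by exhibiting $\pat$ as the interior (dual) subcomplex of the cubical complex $G_A$ and comparing cell counts. Regard $G_A$ as a cubical complex whose $2$-cells are the $A$ plaques, whose $1$-cells are the unit grid edges lying on a plaque, and whose $0$-cells are the grid vertices lying on a plaque. The three clauses in the definition of $\pat$ set up a bijection between the cells of $\pat$ and the \emph{interior} cells of $G_A$: each plaque gives one vertex of $\pat$; each interior grid edge (one shared by two plaques) gives one edge of $\pat$; and each interior grid vertex (one surrounded by all four adjacent plaques) gives one square $2$-cell of $\pat$. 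Hence $\chi(\pat)=\#\{\text{plaques}\}-\#\{\text{interior edges}\}+\#\{\text{interior vertices}\}$, and subtracting this from $\chi(G_A)=\#\{\text{grid vertices}\}-\#\{\text{grid edges}\}+\#\{\text{plaques}\}$ yields, after the plaque terms cancel,
\[
\chi(\pat)-\chi(G_A)=\#\{\text{boundary edges}\}-\#\{\text{boundary vertices}\},
\]
where ``boundary'' denotes the cells of $G_A$ that are not interior.

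It then remains to show the right-hand side vanishes. Since $G_A$ is a compact surface with boundary, its topological boundary $\partial G_A$ is a closed $1$-manifold, i.e.\ a disjoint union of polygonal cycles; on each such cycle the number of edges equals the number of vertices, so $\#\{\text{boundary edges}\}=\#\{\text{boundary vertices}\}$ and $\chi(\pat)=\chi(G_A)=1-g$. Substituting back gives $A-\sum_i n_i(i-1)+K=1-g$, which rearranges to the claimed $K=1-A+\sum_i n_i(i-1)-g$. The step I expect to be the main obstacle — and the one deserving an explicit hypothesis — is precisely the vanishing of $\#\{\text{boundary edges}\}-\#\{\text{boundary vertices}\}$: it fails at a \emph{pinch}, a grid vertex where two plaques meet only at a corner while the two off-diagonal plaques are absent, since there the boundary is not a $1$-manifold and a single vertex carries four boundary edges, forcing an excess of boundary edges over boundary vertices. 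I would therefore make explicit that $G_A$ is edge-connected with no such corner-pinches (equivalently, a genuine manifold-with-boundary), under which $\pat$ is in fact a deformation retract of $G_A$ and the counting above is valid.
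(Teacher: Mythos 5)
Your proof is correct, and it takes a genuinely different route from the paper. You recast the identity as the single equality $\chi(\pat)=\chi(G_{A})$ via the cell counts of $\pat$ (namely $A$ vertices, $\sum_{i}n_{i}(i-1)$ edges since a length-$i$ crossing carries $i-1$ edges, and $K$ squares), and then prove that equality by the dual-complex bijection: plaques of $G_{A}$ correspond to vertices of $\pat$, interior grid edges to edges of $\pat$, interior grid vertices to squares of $\pat$, which reduces everything to $\#\{\text{boundary edges}\}=\#\{\text{boundary vertices}\}$, valid exactly when $\partial G_{A}$ is a $1$-manifold. The paper instead argues by an incremental construction: it builds a contractible $G_{A}$ block by block while tracking $\Delta A$, $\Delta K$ and $\Delta\sum_{i}n_{i}(i-1)$, then punches and enlarges holes, checking $\Delta K=-\Delta A+\Delta\sum_{i}n_{i}(i-1)$ in the edge-adjacent and vertex-adjacent removal cases. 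Your approach is non-inductive, and it is in fact sharper: your pinch caveat is not mere prudence but pinpoints a genuine failure of the lemma as stated, one that the paper's induction silently mishandles. Concretely, take the $4\times 4$ grid minus two interior plaques meeting only at a corner: then $A=14$, $K=2$, $\sum_{i}n_{i}(i-1)=16$, while $\chi(G_{A})=25-40+14=-1$ (the domain is homotopy equivalent to a wedge of two circles), so $g=2$ and the lemma predicts $K=1$. The paper's vertex-intersection case asserts the equality ``remains'' with $g$ unchanged, but enlarging a hole across a corner drops $\chi(G_{A})$ by $1$, so the induction's bookkeeping of $g$ is wrong precisely there; your degree count at boundary vertices (degree $2$ generically, $4$ at a pinch) gives the general relation $\chi(\pat)-\chi(G_{A})=\#\{\text{pinch vertices}\}$, which both explains the discrepancy and yields the corrected formula $K=1-A+\sum_{i}n_{i}(i-1)-g+\#\{\text{pinches}\}$ essentially for free --- information the paper's constructive proof does not reveal. (Even simpler: two plaques meeting at a corner have $K=0$, $A=2$, $\sum_{i}n_{i}(i-1)=0$, $g=0$, against the predicted $K=-1$.) The one inessential remark in your write-up is the claim that $\pat$ deformation retracts onto $G_{A}$ under the no-pinch hypothesis; plausible, but your argument never uses it, since the Euler-characteristic comparison is purely a cell count.
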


\begin{proof}
	Our proof is carried out in two stages: first consider when $G_{A}$ is contractible. This would allow us to build $G_{A}$ in the following way:
	\begin{enumerate}
		\item beginning from the top row, we construct $G_{A}$ from left to right by adding unit square blocks one by one.
		\item for the next row building, since $G_{A}$ is contractible, we can always pick one block whose boundary is attaching the first row. We then extend the row by adding blocks to its left and right. Then we pick another block in this row attaching to the first row and extend it again. Repeat until we finish building the second row.
		\item repeat the above steps row by row until we complete the last row building. 
	\end{enumerate}

	During the construction, we observe how $K$ is changing when $A$ and the $\sum_{i}n_{i}(i-1)$ terms change. When building the first row of $\dom$, adding one block, we have $\Delta A=1$, $n_{i}=1$ and $\Delta i=1$, $K=0$. Note that with $\Delta A=1$, $\Delta K=1$ if and only if a cross emerges in $\pat$. That implies $\Delta \sum_{i}n_{i}(i-1)=2$, by induction, we have
	\[
		K=1-A+\sum_{i}n_{i}(i-1),
	\] 
	when $G_{A}$ is contractible.
	
	Next, suppose we have holes in $G_{A}$. We again take the constructional tactic. The base step is when 
	\[
		G_{A}'=G_{A}-\sigma_{s_{1}},
	\]
	where $\sigma_{s}$ stands for a solid square and $G_{A}$ is contractible, $\sigma_{s_{1}}$ is in the interior of $G_{A}$. We have $\Delta A=-1$, $\Delta K=-4$, and since both the horizontal and vertical crossings become two shorter crossings, $\Delta \sum_{i}n_{i}(i-1)=-4$. Therefore, by induction, if we remove $g$ isolated blocks in the interior of $G_{A}$, we have 
	\[
		K=1-A+\sum_{i}n_{i}(i-1)-g.
	\]
	When enlarging the holes, this equality remains. To see this, we consider all the possible situations when a generic hole (a missing block in the interior of $G_{A}$ is enlarged. Suppose a sequential removal of $\sigma_{s_{1}},\dots,\sigma_{s_{n}}$ leads to an enlarging hole in $G_{A}$. $\Delta A=-1$ with $\sigma_{s_{i+1}}$ removed.
	Therefore, if $\sigma_{s_{i}} \cap\sigma_{s_{i+1}}$ is an edge, $\Delta K=-2$, $\Delta \sum_{i}n_{i}(i-1)=-3$; if $\sigma_{s_{i}}\cap\sigma_{s_{i+1}}$ is a vertex, $\Delta K=-3$, $\Delta \sum_{i}n_{i}(i-1)=-4$. In both cases, $\Delta K=-\Delta A+\Delta\sum_{i}n_{i}(i-1)$. We are done.
\end{proof}

Now we can prove Theorem~\ref{t:Euler}.

Suppose $\pat$ has $n_{i}$ crossings with length $i$, permutation happens only on the crossing lines. Enumerating the number of cells in different dimensions is similar to what we have done in last section:

\begin{equation}
	\#(\sigma_{s})=(A+1)!K,
\end{equation}
where $K$ is the number of $\sigma_{s}$ in $\pat$.

\begin{equation}
	\#(\sigma_{t})=\sum_{i} P(A+1,A-i)n_{i}\frac {i(i-1)}2(i+1)!
\end{equation}
The triangular 2-cells all come from the expansion of permutahedrons.

\begin{equation}
	\begin{aligned}
		\#(e)=&\mathsmaller{\sum\limits_{i}(n_{i}-1)(A+1)!+\sum\limits_{i} P(A+1,A-i)n_{i}i(i+1)!}\\
		&\mathsmaller{+\sum\limits_{i} P(A+1,A-i)n_{i}\left(\frac {i(i-1)}2-\left(i-1\right)\right)(i+1)!}
	\end{aligned}	
\end{equation}

The first part comes from edges on $\pat$, the second and third part comes from edges on the expansion of permutahedrons that are not glued with those on $\pat$. They belong to swarm shift and edges connecting to the vertices of permutahedrons respectively.

\begin{equation}
	\#(v)=\frac A2(A+1)!+\sum_{i} P(A+1,A-i)n_{i}(i+1)!
\end{equation}
The first part comes from vertices on $\pat$, each is shared by two $\pat$ of different patrollers. The second part comes from vertices of the permutahedrons.

Therefore, combining equations (1)-(4), we have 
\begin{equation*}
	\begin{aligned}		\chi(\cov_{G_{A}}(A+1))&=\#(v)-\#(e)+\#(\sigma_{s})+\#(\sigma_{t})\\&=\left(K+A/2-\sum_{i}n_{i}\left(i-1\right)\right)(A+1)!.
	\end{aligned}
\end{equation*}

Together with the lemma, we are done with the proof for 2D domain $G_{A}$. 

\section{GENERALIZATION TO 3D} % (fold)
\label{sec:generalization_to_3d}
Theorem~\ref{t:collapsible} can be generalized to some 3D grid domains with little modification. 3-cells in the free patrolling region $\pat_{p}$ for the excess agent $p$ can only share a vertex for 3-cells in the free patrolling region $\pat_{q}$, where $q\neq p$. again, the complex for the covering configuration space is collapsible to a 1 dimensional complex, as long as the grid domain itself is collapsible to a 1 dimensional complex. 
The Euler formula for the 3D case should work similarly as Theorem~\ref{t:Euler}. We have worked on some specific situations: for example, when $G_{A}=G_{a\times b\times c}$, we have 
\[
\chi(\cov_{G_{A}}(A+1))=(1-A/2)(A+1)!, 
\]
	where $A=abc$.
When $G_{A}$ has $g$ cavities, it has nontrivial 2 dimensional homology, we have
\[
\chi(\cov_{G_{A}}(A+1))=(1+g-A/2)(A+1)!.
\]
We conjecture that
\begin{conj}
	The Euler characteristic of $\cov_{G_{A}}(A+1)$ is
	\[
	\chi(\cov_{G_{A}}(A+1))=(\chi(G_{A})-A/2)(A+1)!,
	\]
	where $\chi(G_{A})$ is the Euler characteristic for the domain $G_{A}$ in both 2D and 3D. 
\end{conj}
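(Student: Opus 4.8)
The plan is to prove the Euler formula uniformly in dimension $2$ and $3$ by computing $\chi(\mathcal{K})$, $\mathcal{K}=\cov_{G_{A}}(A+1)$, directly from the gluing construction rather than through collapsibility. This is essential in $3$D: as soon as $G_{A}$ has a cavity, $\pat$ (and hence $\mathcal{K}$) acquires $2$-dimensional homology, so $\mathcal{K}$ is no longer homotopy equivalent to a $1$-complex and Lemma~\ref{lem:collapsible} does not apply; the Euler characteristic, being an alternating count of cells, is insensitive to this. I would split $\mathcal{K}$ into the \emph{patrolling part} $P=\bigcup_{p}\pat_{p}$ and the permutahedron pieces glued along crossings, show that the pieces contribute nothing to $\chi$, and reduce the theorem to the two identities
\[
\chi(P)=(A+1)!\bigl(\chi(\pat)-A/2\bigr),\qquad \chi(\pat)=\chi(G_{A}).
\]
Both hold in $2$D (they underlie Theorem~\ref{t:Euler}), and the substance of the conjecture is that they survive to $3$D.

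The first identity is pure bookkeeping and is dimension-independent. There are $(A+1)!$ free patrolling regions, all isomorphic as polyhedral complexes, and by the gluing rule of Section~\ref{sec:proof} two of them meet in \emph{exactly} one vertex (a critical configuration) when their labels differ in a single entry, and in no cell of positive dimension. Hence in $P$ each vertex is counted twice and every higher cell once; since each region has $A$ vertices, $\chi(P)=(A+1)!\,\chi(\pat)-\tfrac{A}{2}(A+1)!$. The paper already states this ``meet only in a vertex'' property for the top cells in $3$D; I would first check it for cells of every dimension, after which the count is valid verbatim.

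The vanishing of the pieces' contribution is where the uniformity across dimensions really comes from, and I expect it to be the cleanest step. A crossing is a maximal axis-parallel line of cubes, hence one-dimensional in any ambient dimension, and on a crossing of length $i$ the excess agent solves a $1$D covering problem; the piece glued there is always the expansion $\cp'$ of the barycentric subdivision of $\mathtt{Skel}_{1}(\Pi_{i})$, meeting $P$ exactly along its single-shift subcomplex $S$ (the midpoints and the single-shift edges). Adding such a piece changes $\chi$ by $\chi(\cp')-\chi(S)$, and a direct count gives $\chi(\cp')=(i+1)!\,(1-i/2)$ and $\chi(S)=(i+1)!\,i/2-(i+1)!\,(i-1)=(i+1)!\,(1-i/2)$, so the change is $0$ for every $i$: the swarm-shift edges, triangles, and the original vertices and subdivided edges of $\cp'$ cancel identically (intuitively, $\cp'$ collapses onto $S$). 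Since this uses only crossings and Theorem~\ref{t:1D}, it is literally the same computation in $3$D, and yields $\chi(\mathcal{K})=\chi(P)=(A+1)!(\chi(\pat)-A/2)$.

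The real obstacle is the geometric identity $\chi(\pat)=\chi(G_{A})$ in $3$D. Here $\pat$ is the cubical complex with a vertex per cube, an edge per face-adjacent pair, a square per coplanar $2\times2$ block and a cube per $2\times2\times2$ block---a half-unit erosion of $G_{A}$, and erosion can change the homotopy type, so only the \emph{equality of Euler characteristics} is being claimed. I would prove it by the inductive strategy of the lemma preceding Theorem~\ref{t:Euler}: build $G_{A}$ one cube at a time and verify that each attachment alters $\chi(\pat)$ and $\chi(G_{A})$ by equal amounts. The difficulty is the enlarged $3$D case analysis---a new cube may touch the existing region along a face, an edge or a vertex, and creating or enlarging a cavity or opening a handle must each be tracked separately (the paper's box, with $\chi=1$, and box-with-$g$-cavities, with $\chi=1+g$, are the base cases). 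One must also restrict to \emph{face-connected} $G_{A}$, since cubes meeting only along an edge or vertex would split $\pat$ and break the identity. Once this induction is closed, combining it with the two dimension-independent counts above gives $\chi(\cov_{G_{A}}(A+1))=(\chi(G_{A})-A/2)(A+1)!$ in both dimensions. A cleaner route I would also pursue is a nerve-type theorem matching $\chi(\pat)$ to $\chi(G_{A})$ directly, which would avoid the case analysis entirely.
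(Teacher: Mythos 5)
Where you stand relative to the paper: this statement is a \emph{conjecture} there, not a theorem---the paper proves the $2$D case (Theorem~\ref{t:Euler}) and in $3$D only verifies rectangular boxes and boxes with $g$ cavities---so there is no complete proof of record, and your proposal genuinely goes further than the paper does. The parts you actually carry out are correct. The per-crossing cancellation checks out: $\chi(\cp')=\chi(\mathtt{Skel}_{1}(\Pi_{i}))=(i+1)!\,(1-i/2)$ since expansion preserves homotopy type, while the single-shift subcomplex $S$ has $(i+1)!\,i/2$ midpoints and $(i+1)!\,(i-1)$ single-shift edges, giving $\chi(S)=(i+1)!\,(1-i/2)$ as well; this repackages the paper's equations (1)--(4) in a visibly dimension-independent way. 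The bookkeeping $\chi(P)=(A+1)!\,(\chi(\pat)-A/2)$ is the paper's vertex-sharing count, and your observation that the paper's area lemma $K=1-A+\sum_{i}n_{i}(i-1)-g$ is precisely $\chi(\pat)=\chi(G_{A})$ in disguise is exactly right. Your face-connectivity caveat is also a genuine catch: for two plaques sharing only an edge ($3$D) or a corner ($2$D), every agent must sit exactly on a plaque, so $\cov_{G_{A}}(A+1)$ is the discrete set of onto assignments, $2^{3}-2=6$ points with $\chi=6$, while the formula predicts $(1-1)(A+1)!=0$; the conjecture as literally stated (merely connected $G_{A}$) is false, and even Theorem~\ref{t:Euler} implicitly needs the hypothesis you identified.

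The genuine gap is the one you yourself flag but do not close: $\chi(\pat)=\chi(G_{A})$ in $3$D is only sketched, and it is where all the content of the conjecture lives. Moreover the induction is harder than your sketch suggests, because the equality is \emph{not} preserved stepwise: if a new cube meets the already-built region in a face together with a disjoint edge, then $\Delta\chi(G_{A})=1-2=-1$ while $\Delta\chi(\pat)=1-1=0$, and an edge-only attachment gives $0$ versus $1$. The two sides agree only after global compensation, so you must either prove that every admissible $G_{A}$ admits a build order in which each attachment is clean (a shellability-type statement, nontrivial and not obviously true for all face-connected domains), or replace the induction by a non-local argument. The nerve route you propose as the cleaner alternative does not work off the shelf either: $\pat$ is \emph{not} the nerve of the cover by closed cubes---the nerve contains simplices for edge- and vertex-adjacent pairs, which is exactly what $\pat$ omits and exactly why face-connectivity matters---so a standard nerve theorem compares $\chi(G_{A})$ with the wrong complex. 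Finally, you silently assume the paper's gluing model ($\pat_{p}$'s plus crossing pieces, meeting one another only inside $P$) exhausts all cells of $\cov_{G_{A}}(A+1)$ in $3$D; the paper asserts this only informally in Section~\ref{sec:generalization_to_3d}, and your computation needs the (believable, but unproven) fact that with excess one every coordinated motion is a chain along a single axis-parallel line, because a displaced cube leaves a slab-shaped gap that only a co-axial neighbor can cover. Until the identity $\chi(\pat)=\chi(G_{A})$ and this modeling step are established, what you have is a correct and clarifying reduction of the conjecture, not a proof of it.
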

But $\cov_{G_{A}}(A+1)$ now is not necessarily homotopy equivalent to 1 dimensional complex in this general setting.
% section generalization_to_3d (end)
% section proof (end)
\section{CONCLUSIONS} % (fold)
\label{sec:conclusions}
We have studied the problem of covering grid domains with excess one. In this situation, we are able to find a stunningly simple visualization of the topology of the covering configuration space. The Euler formula we find establishes a straight forward relationship between the topology of the space of coverings and the geometry as well as the topology of the working domain. By assuming simple geometric shape of the domain and a good match with the covering agents, We have managed to utilize combinatorial methods to find the important topological feature of the covering configuration space. One naturally ask the question: what can we say about the topology with excess number more than one? To see the topology would become more complicated, but can we still find some import topological quantity, say, the total Betti number? We can also ask questions about the unlabeled covering configuration space. What can we say about the symmetric group action on the homology group of the covering configuration space? We shall address these issues in follow-up papers.
% section conclusions (end)
\section{Acknowledgement} % (fold)
\label{sec:acknowledgement}
The author wants to thank Prof. Y. Baryshnikov for many inspiring discussions. 
% section acknowledgement (end)
\begin{appendices}
\section{Collapsibility} % (fold)
\label{sec:collapsibility}
Constructing a sequence of collapses in standard text can be well described using partial matching defined on a poset (also called discrete vector field by R. Forman, see~\cite{MR1612391}), the latter is actually more general than just the elementary collapses. To begin with this standard treatment, we recognize a polyhedral complex $\cp$ as a poset consisting of all the faces and whose partial order relation is the \emph{covering relation} on the set of faces, namely, $x\prec y$ if $x\subset y$ and there is no $z\in \cp$ such that $x\subset z\subset y$.

To describe the structural collapses, we follow the definitions of partial matching from D. Kozlov in~\cite{MR2361455}. 
\begin{definition}
	A \textbf{partial matching} on a poset $P$ is a subset $M\subset P\times P$ such that 
	\begin{enumerate}
		\item $(a,b)\in M$ implies $a\prec b$;
		\item each $\sigma\in P$ belongs to at most one element in $M$.
	\end{enumerate}
	A partial matching is called \textbf{acyclic} partial matching $(a_{i},b_{i})\in M$ if there does not exist the following nontrivial closed path
	\[
		a_{1}\prec b_{1}\succ a_{2}\prec b_{2}\succ\cdots\succ a_{n}\prec b_{n}\succ a_{1}
	\]
	with $n\geq 2$ and all $b_{i}$ being distinct.
\end{definition}
Note that a single pair of matching is always acyclic, but they may not be an elementary collapsable pair. Still, collapsing the matched elements in an acyclic partial matching will not change the homotopy type of the underlying space.  

Our goal is to prove the following lemma: 
\begin{lem}
	any $2$-dimension cell in the polyhedron complex $\mathcal{K}$ of our covering configuration space belongs to an element (collapsing pair) in some acyclic matching on $\cp$.
\end{lem}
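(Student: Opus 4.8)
The plan is to prove the appendix lemma by exhibiting a single acyclic partial matching $M$ on the face poset of $\mathcal{K}$ in which every $2$-cell occurs as the larger element of a matched pair; collapsing along $M$ then strips out all $2$-cells and leaves a complex of dimension at most $1$, and by Proposition~\ref{prop:collapse} in its acyclic-matching form (Kozlov~\cite{MR2361455}) this is a homotopy equivalence, which is how Theorem~\ref{t:collapsible} follows. I would build $M$ as a disjoint union $M=M_{t}\sqcup M_{s}$, where $M_{t}$ matches the triangular cells $\sigma_{t}$ coming from the permutahedron expansions $\cp'$ and $M_{s}$ matches the square plaques $\sigma_{s}$ of the patrolling regions $\pat_{p}$.

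For $M_{t}$ I would import, on each copy of $\cp'$ sitting over a crossing, the standard sequence of elementary collapses realizing the expansion $\cp'\searrow S\cp$. Each added triangle $t$ is paired with an edge of its boundary: the swarm-shift edge in $E_{2}$ when the crossing is long enough that one is present and interior to the expansion, and otherwise a half-edge obtained by peeling off the triangles incident to a permutahedron vertex $v$, exactly as in the sketch. The one constraint I impose is that the single-shift edges $E_{1}$ — which were identified with the crossing edges of $\pat_{p}$ and therefore also bound square plaques — are never spent as the free face of a triangle; the sketch's counting shows this is always achievable, with the freed half-edges cascading the collapse inward. Being a genuine sequence of elementary collapses, $M_{t}$ is acyclic on each $\cp'$.

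For $M_{s}$, within a fixed $\pat_{p}$ the square plaques form a planar grid region. Fixing a scanning order on the grid (say lexicographic on $(i,j)$) and matching each plaque $\sigma_{s}=\{v_{(i,j)},\dots\}$ with one designated boundary edge that has not yet been consumed sweeps $\pat_{p}$ onto a $1$-dimensional subcomplex, and this sweep is acyclic because the scanning order supplies a strictly monotone potential along any alternating path. Before combining, I would check that $M_{t}$ and $M_{s}$ are disjoint as partial matchings: $M_{t}$ uses only half-edges and $E_{2}$-edges, all interior to the expansions, while $M_{s}$ uses only crossing (grid) edges of $\pat_{p}$, so no edge is claimed twice and no cell lies in two pairs.

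The main obstacle is global acyclicity of $M=M_{t}\sqcup M_{s}$ across the gluing, since the permutahedron pieces and the patrolling pieces share $0$-cells and single-shift $1$-cells. I would control this with the patchwork (Cluster) lemma of Kozlov~\cite{MR2361455}: define an order-preserving map $\varphi$ from the face poset of $\mathcal{K}$ onto a small poset that records, for each cell, first the patroller $p$ and the crossing or grid region it belongs to and then its rank in the scanning order, arranged so that every matched pair lies in a single fibre while the shared unmatched cells map to minimal values. Each fibre then carries either one step of a $\cp'$-collapse or one step of the grid sweep, both already acyclic, and the lemma assembles them into a global acyclic matching. The delicate point to verify is precisely that $\varphi$ is order-preserving at the seams, where a single-shift edge bounds both a triangle (which $M_{t}$ leaves unmatched there) and a square: this must forbid any alternating cycle that leaves the permutahedron part, runs through a shared crossing edge, and returns. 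I expect it to reduce to the two facts already isolated in the sketch — that an edge of $\pat_{p}$ is never shared with a $2$-cell of $\pat_{q}$ for $q\neq p$, and that $M_{t}$ never descends onto a crossing edge — so that the only cells common to two regions are unmatched and cannot appear as the $b_{i}$ of a closed path.
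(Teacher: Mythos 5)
Your proposal is in substance the same matching as the paper's appendix proof: the paper likewise splits the $2$-cells into the triangles $\tau_{t}$ of the permutahedron expansions and the squares $\tau_{s}$ of the patrolling regions, matches every triangle with an edge that is \emph{not} an edge of $\pat_{p}$ (free swarm-shift edges where available, half-edges at a permutahedron vertex otherwise, with freed edges cascading), and matches the squares with grid edges by a sweep; your two guiding constraints --- never spend an $E_{1}$ edge on a triangle, and edges of $\pat_{p}$ are never shared with $2$-cells of $\pat_{q}$ --- are exactly the two facts the paper isolates. Where you genuinely diverge is in verifying acyclicity. The paper sweeps the squares from $\partial\pat_{p}$ inward (boundary squares matched with their boundary edges, then interior squares matched against edges shared with already-matched neighbors) and rules out a closed alternating path by direct case analysis: no $\sigma_{0}$ in such a path can be incident to a $\tau_{t}$, and any alternating path among squares can be extended to the boundary, contradicting closedness. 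You instead use a scan-order potential for the squares and assemble the local matchings globally via Kozlov's patchwork lemma through an order-preserving map $\varphi$. Your route buys robustness --- the monotone potential and the fibrewise assembly replace the paper's somewhat delicate path-chasing (for instance its argument that a cycle through two triangles sharing an edge would have to encircle a permutahedron vertex) --- at the cost of constructing the auxiliary poset. One concrete repair is needed in your $\varphi$: a triangular cell does not belong to a single patroller $p$, since the complex $\cp'$ over a crossing of length $i$ is common to all $i+1$ agents on that crossing (this is visible in the paper's count $\#(\sigma_{t})=\sum_{i}P(A+1,A-i)\,n_{i}\,\frac{i(i-1)}{2}\,(i+1)!$, which is indexed by the fixed off-crossing assignment, not by a patroller), so your crossing fibres must be keyed to the crossing datum rather than to the pair $(p,\text{crossing})$. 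With that change --- vertices and unmatched edges at the bottom of the target poset, grid fibres in scan order, crossing fibres on top --- $\varphi$ is order-preserving at the seams, and your deferred check reduces, as you anticipated, to the two structural facts above.
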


Given a 2D grid domain $G_{A}$, possibly with holes, we can fix all the other covering agent while allowing one to freely patrol, this enables us to label a subcomplex $\pat_{p}$ of the covering configuration space. All the $2$-cells in $\cov_{G_{A}}(A+1)$ have exactly two geometric shapes:
\begin{enumerate}
	\item square plaques $\tau_{s}$: due to planar movement of one single covering agent. They all come from $\pat_{p}$ for some $p$.
	\item triangular plaques $\tau_{t}$: due to one dimensional (horizontally or vertically) rotations of multiple covering agent. They all come from the expansions of certain permutahedrons glued with $\pat_{p}$ along its crossings.  
\end{enumerate}
We describe the matching as follows: for different $\tau_{t}$'s, match each with $\sigma\prec \tau_{t}$ satisfying $\sigma\notin \partial \pat_{p}$ and preferably choose the $\sigma$ which is free (not shared by another $\tau_{t}$); if $\sigma\notin \partial \pat_{p}$ is not free, we can choose one arbitrarily while avoiding repetitive matching it with other $\tau_{t}$'s. 

For different $\tau_{s}$'s, we match them in the following way:
\begin{enumerate}[$\bullet$]
	\item We say a $\tau_{s}$ is on the boundary if $\tau_{s}$ has one edge $\sigma$ on $\partial \pat_{p}$ for some $p$, match $\tau_{s}$ with $\sigma$. Denote the collection of all the boundary ones as the set $M$. 
	\item  For other $\tau_{s}$, if $\tau_{s}\cap \tau\neq \emptyset$, for some $\tau\in M$, then they must intersect at some edge $\sigma$, match $\tau_{s}$ with $\sigma$. If they intersects at more than one edges, we can pick up an arbitrary one to match with $\tau_{s}$. Update $M$ with all the matched $\tau_{s}$'s.
	\item Repeat the last step until $M$ contains all $\tau_{s}$'s.  
\end{enumerate}

This partial matching is well-defined and is acyclic. Indeed, every $2$-cell can be matched with their incidental edges without repetition. This relies on the following observation: for $\tau_{t}$, each has at least two edges which are not part of $\partial \pat_{p}$; and at most two $\tau_{t}$'s share an edge. As for $\tau_{s}$, each has 4 candidate edges to be matched with, and at most two $\tau_{s}$'s share an edge. Hence each $2$-cell can be matched with one of their incidental edges without repetitive matching with other $2$-cells. The recursive way of matching all $\tau_{s}$ is valid as essentially any $\tau_{s}$ is path connected to one on the boundary. 

Next we show it is acyclic. Suppose there exists a sequence $\sigma_{0},\cdots,\sigma_{t}$ such that all $\sigma_{i}$ are different except $\sigma_{t}=\sigma_{0}$, each $\sigma_{i}$ is matched with some $2$-cell, they form a nontrivial closed path. Then we claim $\sigma_{0}$ cannot be an incidental edge of any $\tau_{t}$, the triangular $2$-cells. Indeed, first $\sigma_{0}$ cannot be a free edge of some $\tau_{t}$, since otherwise $\tau_{t}\succ \sigma_{0}$ has to appear in the matching at least twice, which is not allowed by the definition of partial matching. Secondly, for other $\sigma_{0}\notin \partial \pat_{p}$, suppose it is shared by two triangular $2$-cells, to make it a closed path, we have to make the two cells the first and the last one respectively in the closed path, this is only possible when we include all the triangular cells which share a single vertex (vertex of the permutahedron) in the closed path, thus all $\tau_{t}$'s has to be matched with shared edges, yet by construction, we know at least one triangular cell is matched with a free edge, as free edges always exist in some $\tau_{t}$. Finally, $\sigma_{0}\prec \tau_{t}$ cannot be edge on $\pat_{p}$ , this case is obvious as such $\sigma_{0}$ can only be matched with some $\tau_{s}$ on the boundary, according to our scheme; the only chance to get back is having some $\tau_{t}$ in the closed path, which is impossible. 

What remains to be proved is that $\sigma_{0}$ can not be any interior edge of $\pat_{p}$, for any $p$. By the inductive construction, we can draw a path connecting the sequence $\sigma_{0},\cdots,\sigma_{t}$ all the way to an edge on the boundary, this contradicts with the path being closed. We are done.  
% section collapsibility (end)	
\end{appendices}
\bibliographystyle{plain} 
\bibliography{patrol}
\end{document}